\newtheorem{theorem}{Theorem}
\numberwithin{theorem}{section}
\newtheorem{proposition}[theorem]{Proposition}
\newtheorem{lemma}[theorem]{Lemma}
\newtheorem{corollary}[theorem]{Corollary}
\newtheorem{remark}[theorem]{Remark}
\newtheorem{example}[theorem]{Example}
\newcommand{\RR}{\mathbb{R}}
\newcommand{\PP}{\mathbb{P}}
\newcommand{\CC}{\mathbb{C}}
 \date{}
\title{\textbf{Real Rank Two Geometry}}
\author{Anna Seigal and Bernd Sturmfels}
\begin{document}

\maketitle

\begin{abstract} \noindent
The real rank two locus of an algebraic variety is the closure of the union of all secant lines spanned~by real points.
We seek a  semi-algebraic description of this set.
Its algebraic boundary consists of the
tangential variety and the
edge variety.  Our study of Segre and Veronese 
varieties yields a characterization of tensors of real rank~two.
\end{abstract}

\section{Introduction} \label{1}

Low-rank approximation of tensors is a fundamental problem in applied mathematics \cite{SL, KB}.
We here approach this problem from the perspective
of real algebraic geometry. Our  goal is to 
give an exact semi-algebraic description of
the set of tensors of real rank two and to characterize its  boundary.
This complements the results on tensors of
non-negative rank two presented in \cite{ARSZ}, and
it offers a generalization to the setting of
arbitrary varieties, following~\cite{BS}.

A familiar example is that of
$2 \times 2 \times 2$-tensors $(x_{ijk})$ with real entries.
Such a tensor lies in the closure of the real rank two tensors if and only if
the {\em hyperdeterminant} is non-negative:
\begin{equation}
\label{eq:hyperdet}
\begin{matrix}
 x_{000}^2 x_{111}^2
+x_{001}^2 x_{110}^2
+x_{010}^2 x_{101}^2
+x_{011}^2 x_{100}^2
+4 x_{000} x_{011} x_{101} x_{110}
+4 x_{001} x_{010} x_{100} x_{111}  \\
-2 x_{000} x_{001} x_{110} x_{111}
-2 x_{000} x_{010} x_{101} x_{111}
-2 x_{000} x_{011} x_{100} x_{111}\,\, \,\qquad  \qquad \\
-2 x_{001} x_{010} x_{101} x_{110}
-2 x_{001} x_{011} x_{100} x_{110}
-2 x_{010} x_{011} x_{100} x_{101} \,\,\, \geq \,\,\, 0.
\end{matrix}
\end{equation}
If this inequality does not hold then the tensor  has
rank two over $\mathbb{C}$ but rank  three over $\mathbb{R}$.

To understand this example geometrically,
consider the {\em Segre variety}
$X = {\rm Seg}(\PP^1 \times \PP^1 \times \PP^1)$,
i.e.~the set of rank one tensors,
regarded as points in the projective space $\PP^7 =
\PP( \CC^2 \otimes \CC^2 \otimes \CC^2)$.
The hyperdeterminant defines a quartic hypersurface $\tau(X)$ in $\PP^7$.
The real projective space $\PP^7_\RR$ is divided
into two connected components by its real points
$\tau(X)_\RR$. One of the two connected components is the locus
$\rho(X)$ that comprises the real rank two tensors.

\smallskip

This paper views real rank in a general geometric framework,
studied recently by Blekherman and Sinn~\cite{BS}.
Let $X$ be an irreducible variety in  a complex projective space $\PP^N$ that is defined over 
$\RR$ and whose set  $X_\RR = X \cap \PP^N_\RR$ of real points
 is Zariski dense in $X$. The {\em secant variety}  $\sigma(X)$
 is the closure of the set of points in $\PP^N$ that lie
 on a line spanned by two points in $X$. 
 The {\em tangential variety}  $\tau(X)$ 
 is a subvariety of the secant variety.
 Namely, $\tau(X)$  is the closure of the set of points in $\PP^N$ that
 lie on a tangent line to $X$ at a smooth point. In this paper closure is taken with respect to the Euclidean topology, unless otherwise specified. For the secant and tangential varieties above, the 
 Euclidean closure and Zariski closure coincide.
 
Our object of interest is the {\em real rank two locus} $\rho(X)$.
This is a semi-algebraic set in the real projective space $\PP^N_\RR$.
We define $\rho(X)$ as the (Euclidean) closure of the set of points 
that lie on a line spanned by two points in $X_\RR$.
Our hypotheses ensure that $\rho(X)$  is Zariski dense in $\sigma(X)$. 
The inclusion of the closed set
$\rho(X) $ in the real secant variety $ \sigma(X)_\RR$ is usually strict. The difference consists of
points of $X$-rank two whose real $X$-rank exceeds~two.

Two varieties most relevant for applications
are the {\em Segre variety} $X = {\rm Seg}(\PP^{n_1-1} \times \cdots \times \PP^{n_d-1})$ and
the {\em Veronese variety} $X = \nu_d ( \PP^{n-1} )$.
The ambient dimensions are $N = n_1 \cdots n_d -1$ and
$N = \binom{n+d-1}{d}-1$, and $X$ consists of (symmetric)
tensors of rank one. The secant variety $\sigma(X)$ is the closure of 
the set of tensors of complex rank two,
and  $\sigma(X)_\RR$ is the set of real points of that complex projective variety.
The real rank two locus $\rho(X)$ is the closure of the 
tensors of real rank two. This is a subset
of $\sigma(X)_\RR$. The containment is strict when $d \geq 3$.

It is instructive to examine the case of $ 3 \times 2 \times 2 $-tensors. 
The secant variety $\sigma(X)$ has dimension $9$ in $\PP^{11}$.
By \cite{Rai},
it  consists of all tensors whose $3 \times 4$ 
matrix flattening satisfies
\begin{equation}
\label{eq:threebyfour}
{\rm rank}
 \begin{pmatrix}
x_{000} & x_{001} & x_{010} & x_{011} \\
x_{100} & x_{101} & x_{110} & x_{111} \\
x_{200} & x_{201} & x_{210} & x_{211} 
\end{pmatrix} \quad \leq \quad 2.
\end{equation}
The tangential variety $\tau(X)$  has codimension one in $\sigma(X)$.
The ideal of $\tau(X)$ is generated by the  $3 {\times} 3$-minors
of (\ref{eq:threebyfour}) and six hyperdeterminantal
quartics \cite{Oed}.
The set difference $\sigma(X)_\RR \backslash \tau(X)_\RR$ is disconnected.
The closure of one of its connected components is
the real rank two locus $\rho(X)$. Theorem \ref{thm:subhyper} says that
$\rho(X)$ is defined by three inequalities like~(\ref{eq:hyperdet}).

\smallskip

This article makes the following contributions. In Section \ref{genX} we determine
the algebraic boundary of the real rank two locus $\rho(X)$,
and we characterize boundary points that can be selected by
Euclidean distance optimization.  These results (Theorems  \ref{thm:main}
and \ref{thm:notrank1}) are for general varieties $X$.
 Section  \ref{curves} offers a detailed study of the case when $X$ is a space curve.
Section \ref{sec:tensors} is devoted to the usual setting of tensors,
when $X$ is a Segre or Veronese variety. The real rank two locus for tensors is
 characterized by hyperdeterminantal inequalities (Theorem \ref{thm:subhyper})
and its algebraic boundary is given by the tangential variety (Theorem \ref{cor:segver}).
In Section~\ref{sec:five} we apply \cite{OR} to derive explicit equations
(in Corollary \ref{eq:quadrics}) for that  boundary when
 $X$ is the Veronese. We also
characterize symmetric $2 {\times} 2 {\times} \cdots {\times} 2 $-tensors of real rank two.

Our  results here lay the geometric foundations for a subsequent paper that  studies numerical
algorithms for finding best real border rank two approximations of a given tensor.

\section{Projective Varieties}\label{genX}

We fix an irreducible real projective variety $X \subset \PP^N$  whose set of real points $X_\RR$
 is Zariski dense in $X$. The  tangential variety  $\tau(X)$ is contained in the
 secant variety  $\sigma(X)$.  If the inclusion   $\tau(X) \subset \sigma(X)$
 is strict then both varieties have the expected dimensions:
 \begin{equation}
 \label{eq:expdim}
  {\rm dim}(\sigma(X)) = 2 \cdot {\rm dim}(X) +1 \qquad
 \hbox{and} \qquad
 {\rm dim}(\tau(X)) = 2 \cdot {\rm dim}(X) . 
\end{equation}
This is  Theorem 1.4 in Zak's book  \cite{Zak}.
  If  $\tau(X) = \sigma(X)$ then the variety $X$ is called
{\em defective}. Otherwise,  the equalities in (\ref{eq:expdim}) hold,
and we say that $X$ is {\em non-defective}.

We write $\hat X \subset \CC^{N+1}$ for the affine cone over $X$.
The {\em $X$-rank} of a vector $x$ in $\CC^{N+1}$  is the smallest
$r$ such that $x = x_1 + \cdots + x_r$ with $x_1,\ldots,x_r $ in $ \hat X$,
and analogously for points $x$ in $\PP^N$.
If $x$ is real then its {\em real $X$-rank} is the smallest $r$ such that $x = x_1 + \cdots + x_r$ with
$x_1,\ldots,x_r $ in $ {\hat X}_\RR$.
The loci of $X$-rank $\leq r$ and real $X$-rank $\leq r$ are typically not closed.
We define the {\em $X$-border rank} and {\em real $X$-border rank}
by passing to the closure of these loci.
The secant variety $\sigma(X)$ consists of points of $X$-border rank $\leq 2$.
The real rank two locus $\rho(X)$ consists of points of real $X$-border rank $\leq 2$.
The latter is Zariski dense in the former.

The {\em real rank two boundary}  $\partial(\rho(X))$ is the set
 $\rho(X)$ minus its relative interior. Here the term ``relative'' refers to 
 $\sigma(X)_\RR$ being the ambient topological space.
Note that  $\partial(\rho(X))$ and $\sigma(X)_\RR$
 are semi-algebraic subsets in $\PP^N_\RR$.
We also note that $\partial(\rho(X))$ equals the topological boundary of 
$\rho(X)$, as discussed for similar settings in \cite[\S 4]{LS} and \cite[\S 5]{MMSV}.
The Zariski closure of  the set $\partial(\rho(X))$ in $\PP^N$
is denoted $\partial_{\rm alg}(\rho(X))$ and is called the
{\em algebraic real rank two boundary} of $X$. This projective variety
has codimension one in $\sigma(X)$. Our aim is to describe~it.

We need the following definitions.  Let $p$ and $q$ be  distinct
smooth points on $X$ whose  corresponding tangent spaces 
$T_p(X)$ and $T_q(X)$ intersect in $\PP^N$.
The secant line spanned by such  $p$ and $q$ is called an 
{\em edge} of $X$. The Euclidean
closure of the union of all edges of $X$ is 
a Zariski closed subset in the complex projective space $\PP^N$. 
 This subset is the {\em edge variety} $\,\epsilon(X)$.
If ${\rm dim}(X) = (N-1)/2$ then the edge variety  $\epsilon(X)$ is usually a hypersurface
in  $\sigma(X)= \PP^N$. That hypersurface is the variety
$\, (X^{[2]})^*\,$ in  \cite{RS1}, where it
 plays~an important role in convex algebraic geometry. 
For curves $X$ in $\PP^3$, this is the {\em edge surface} studied in~\cite{RS2}.

\begin{theorem} \label{thm:main}
Let $X$ be a non-defective variety in $\PP^N$
whose real points are Zariski dense. If  the 
algebraic real rank two boundary of $X$ is non-empty then it is a variety
of pure codimension one inside the secant variety $\sigma(X)$. Its irreducible components
arise from the tangential variety and  the edge variety. In symbols,
we have the equi-dimensional inclusion
\begin{equation}
\label{eq:tauepsilon}
 \partial_{\rm alg}(\rho(X)) \,\,\subseteq \,\,\tau(X) \,\cup\,\epsilon(X).
\end{equation}
\end{theorem}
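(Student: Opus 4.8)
The plan is to realize $\rho(X)$ as the image of an explicit real parametrization and to locate its boundary inside the two loci where this parametrization degenerates. Write $m=\dim(X)$ and consider the joining map
\[
\Phi\colon X_\RR\times X_\RR\times\PP^1_\RR\longrightarrow\sigma(X)_\RR,\qquad (p,q,[\lambda:\mu])\longmapsto[\lambda p+\mu q],
\]
defined on the locus $p\neq q$ and extended continuously across the diagonal, where it records tangent lines to $X_\RR$. The source is a compact semi-algebraic set, so the image is closed and equals $\rho(X)$. Since $X$ is non-defective, source and target both have dimension $2m+1$, so $\Phi$ is generically finite between equidimensional real spaces. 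I would restrict attention to smooth points of $X$, using that the tangential variety $\tau(X)$ and the edge variety $\epsilon(X)$ are, by definition, \emph{Euclidean closures} of tangent lines and of edges through smooth points; this is what lets limiting configurations be absorbed into them.

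First I would establish the interior criterion. If $z\in\rho(X)$ has one preimage $(p,q,\ell)$ that is a smooth point of the source at which $d\Phi$ is an isomorphism, then by the semi-algebraic inverse function theorem $\Phi$ carries a neighborhood of $(p,q,\ell)$ diffeomorphically onto a neighborhood of $z$, so $z$ lies in the relative interior of $\rho(X)$. Contrapositively, every point of $\partial(\rho(X))$ has all of its preimages in the degeneration locus, consisting of the diagonal $p=q$, the points with a singular endpoint, and the critical points at which $d\Phi$ drops rank. Thus $\partial(\rho(X))$ is contained in the image of this degeneration locus, and it remains to identify the pieces.

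The heart of the argument, which I expect to be the main obstacle, is the critical-locus computation. At smooth points $p\neq q$ the image of $d\Phi$ at $z=[\lambda p+\mu q]$ is the linear span $\langle T_p(X),T_q(X)\rangle$, exactly as in Terracini's lemma (the secant direction already lies in this span); since the span is defined over $\RR$, its real dimension equals its complex dimension. Hence $d\Phi$ is surjective, of rank $2m+1$, precisely when $T_p(X)$ and $T_q(X)$ are disjoint in $\PP^N$, so the critical points with $p\neq q$ are exactly those for which $T_p(X)\cap T_q(X)\neq\emptyset$: by definition these secants are edges, and their images lie in $\epsilon(X)_\RR$. Points of the diagonal contribute tangent lines, whose points lie in $\tau(X)_\RR$. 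The remaining, more delicate bookkeeping concerns a singular endpoint: such configurations are Euclidean limits of secants through smooth points, so the associated boundary points are limits of tangent-line or edge configurations and are therefore swallowed by the closures $\tau(X)$ and $\epsilon(X)$ (those that are not boundary points are interior and may be discarded). Taking Euclidean closures and then Zariski closures yields the inclusion $\partial_{\rm alg}(\rho(X))\subseteq\tau(X)\cup\epsilon(X)$ of (\ref{eq:tauepsilon}).

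Finally I would prove purity. The set $\rho(X)$ is the closure of its $(2m+1)$-dimensional relative interior, so its topological boundary $B=\partial(\rho(X))$ is semi-algebraic of dimension at most $2m$. To see that every component has dimension exactly $2m$, suppose some $z\in B$ lay outside the closure of the $2m$-dimensional part of $B$; then $z$ would have a neighborhood $U$ in $\sigma(X)_\RR$ with $\dim(U\cap B)<2m$, so $U\setminus B$ would be connected, and on it $\rho(X)$ would be simultaneously open and closed, forcing $z$ to be interior or exterior to $\rho(X)$, a contradiction. Hence $B$ is pure of dimension $2m$, so $\partial_{\rm alg}(\rho(X))$ has pure codimension one in $\sigma(X)$; combined with the inclusion above, every irreducible component of $\partial_{\rm alg}(\rho(X))$ is a codimension-one component of $\tau(X)$ or of $\epsilon(X)$, which is the asserted equi-dimensional inclusion.
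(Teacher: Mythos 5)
Your route is genuinely different from the paper's. The paper fixes a general real point $u$ of $\sigma(X)$, uses non-defectivity to get finitely many decomposing pairs depending algebraically on $u$, and then applies the Curve Selection Lemma to classify how a real pair can disappear as $u$ crosses $\partial(\rho(X))$: either a real pair collapses to a single point and becomes complex conjugate (the secant degenerates to a tangent line, landing in $\tau(X)$), or two pairs collide (forcing $T_v(X)$ and $T_w(X)$ to meet, landing in $\epsilon(X)$). You instead parametrize $\rho(X)$ by the join map $\Phi$ and argue that boundary points must lie over the non-submersive locus, which Terracini's lemma identifies with the diagonal (tangent lines) together with the pairs whose tangent spaces meet (edges). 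These encode the same dichotomy --- your critical-locus computation is the infinitesimal version of the paper's colliding-pairs Case 2 --- but you replace the path-following by a single application of the semi-algebraic inverse function theorem, which is arguably cleaner at generic points. For purity the paper invokes \cite[Lemma 4.2]{Sinn}; your connectedness argument is a serviceable direct substitute at smooth points of $\sigma(X)_\RR$.

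Two steps need repair. First, $\Phi$ does \emph{not} extend continuously across the diagonal: the limit of $[\lambda p+\mu q]$ as $q\to p$ depends on the direction of approach, so $X_\RR\times X_\RR\times\PP^1_\RR$ does not compactly parametrize $\rho(X)$ as written, and a boundary point need not have a preimage at all. The fix is to take the Euclidean closure of the graph of $\Phi$ over $\{p\neq q\}$ inside $X_\RR\times X_\RR\times\PP^1_\RR\times\PP^N_\RR$; this incidence set is compact, its image is exactly the closed set $\rho(X)$, and its fibers over the diagonal consist of points on limits of secant lines, which is what the containment in the closure $\tau(X)$ actually requires. Second, your dismissal of singular endpoints is a non sequitur: a secant through a singular point of $X$ is indeed a Euclidean limit of secants through smooth points, but those nearby secants are generically neither tangent lines nor edges, so nothing in your argument forces such a boundary point into $\tau(X)\cup\epsilon(X)$. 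This cannot be killed by a dimension count alone, since the join of the singular locus of $X$ with $X$ can have dimension $2m$, i.e.\ codimension one in $\sigma(X)$. The paper quietly sidesteps this by working only with general points and general curves (whose decomposition points are non-singular); you should either do the same, restricting to a general point of each codimension-one boundary component, or note that for the varieties of interest ($X$ smooth, e.g.\ Segre and Veronese) the issue is vacuous.
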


The hypothesis that $X$ is non-defective is essential for this theorem.
For instance, if $X$ is a plane curve in $\PP^2$ then
$X$ is defective. Blekherman and Sinn  \cite[\S 3]{BS} showed
that $ \partial_{\rm alg}(\rho(X))$ is a union of flex lines,
provided it is non-empty. Such flex lines are not covered by (\ref{eq:tauepsilon}).

\begin{remark}\label{rem:edge} \rm
The tangential variety $\tau(X)$ is always irreducible when $X$ is irreducible.
However, the edge variety $\epsilon(X)$ may be reducible even 
when $X$ is irreducible. For instance, this happens when 
$X$ is the elliptic curve obtained by intersecting two quadratic surfaces
in $\PP^3$; see \cite[Example 2.3]{RS2}. Therefore, it is
possible that  $\partial_{\rm alg}(\rho(X))$ has more than two irreducible components. 
By the definition of $\epsilon(X)$, any point on any irreducible component of $\epsilon(X)$
 is a limit of points lying on at least two secant lines through smooth points of $X$.
\end{remark}

\begin{proof}[Proof of Theorem \ref{thm:main}]
The fact that $\partial_{\rm alg}(\rho(X))$ is pure of dimension one
will be derived from the general result in \cite[Lemma 4.2]{Sinn}:
if a semialgebraic set $S \subset \RR^k$ is nonempty and contained in the
closure of its interior and the same is true for $\RR^k \backslash S$, then the algebraic
boundary of $S$ is a variety of pure codimension one. Since the property is local,
we can here replace $\RR^k$ by $X_\RR$. The argument below will show
that these hypotheses are satisfied here.

Recall from (\ref{eq:expdim}) that ${\rm dim}(\sigma(X)) =  2 \cdot{\rm dim}(X)+1$.
Hence, for a general real point $u$ on the secant variety $\sigma(X)$,
there are only finitely many pairs 
$\{v_1,w_1\}, \{v_2, w_2\}, \ldots,  \{v_k,w_k\}$ 
of points on $X$ such that the line spanned by
$v_i$ and $w_i$ contains $u$. The $2k$ non-singular points of $X$ can be
expressed locally as algebraic
functions of $u$, by the Implicit Function Theorem.
The point $u \in \sigma(X)_\RR$ lies in $\rho(X)$ if at least one of these pairs
$\{v_i,w_i\}$ consists of two real points, and it lies outside
$\rho(X)$ if none of the pairs $\{v_i,w_i\}$ are real. 
By our assumption that the left hand side of (\ref{eq:tauepsilon}) is non-empty, both cases are possible for $X$.

Consider a general real curve that passes through the
boundary $\partial(\rho(X))$ at a point $u^*$, and follow
the $k$ point pairs along that curve.
This uses the Curve Selection Lemma in Real Algebraic Geometry.
Precisely one of  two scenarios will happen at the transition point:

{\em Case 1}:  A pair $\{v_i,w_i\}$ of real points 
merges into a single point
on $X$ and then transitions to a pair of conjugate complex points.
As that transition occurs, the secant line degenerates to a tangent line. Hence the
corresponding point $u^*$ lies in the tangential variety $\tau(X)$.

{\em Case 2}:  Two real pairs $\{v_i,w_i\}$ and
$\{v_j,w_j\}$ come together, in the sense that
$v_i$ and $v_j$ converge to a point $v \in X$
while $w_i$ and $w_j$ converge to another point $w \in X$.
If this happens then the tangent spaces
$T_v(X)$ and $T_w(X)$ meet non-transversally, by the following argument. The secant lines through $u$ arising from the two pairs $\{ v_i, w_i\}$ and $\{v_j,w_j\}$ span a plane that contains the line from $v_i$ to $v_j$ and the line from $w_i$ to $w_j$. In the limit as $v_i,v_j \to v$,
$w_i,w_j \to w$
and $u \to u^*$, a line in $T_v(X)$ will be co-planar to a line in $T_w(X)$. The meeting point of the two lines is their non-transverse intersection. Hence the secant line spanned by $v$ and $w$ 
must be an edge. We conclude that $u^*$ lies in the edge variety $\epsilon(X)$.

Our argument above shows that a generic path through $\partial(\rho(X))$ meets the boundary at either $\tau(X)$ or $\epsilon(X)$. Since the set $\rho(X)$ does not have lower-dimensional components, the Zariski closure of such boundary points is the algebraic real rank two boundary $\partial_{\rm alg}(\rho(X))$. Since the two sets $\tau(X)$ and $\epsilon(X)$ are  Zariski-closed in $\PP^N$, it follows that $\partial_{\rm alg}(\rho(X))$ is contained in their union
 $\tau(X) \cup \epsilon(X)$.
\end{proof}

The present article was motivated by the following optimization problem:
$$
\hbox{Given data $u \in \RR^{N+1}$, find the point  $u^*$ in 
the real rank two locus
 $ \rho(X)$ that is closest to $u$.}
$$
Here and in what follows we identify real points on projective varieties, in $\PP_\RR^N$, and their affine cones
in $\RR^{N+1}$. The term
``closest'' refers to either the Euclidean norm or  a weighted Euclidean norm
as in \cite{DHOST, LS}.
 The algebraic complexity of this problem is measured by the
{\em Euclidean distance degree} (ED degree).
A priori, five scenarios govern the location of the closest approximation $u^*$ to a random data point $u$:
\begin{itemize}
\item[(a)] $u^*$ is the point
in $\sigma(X)_\RR$ that is closest to $u$, and it is a smooth point of $\sigma(X)$. \vspace{-0.1in}
\item[(b)] $u^*$ is the point in $X_\RR$ that is closest to $u$; in particular, it is a singular point of $\sigma(X)$.
\vspace{-0.1in}
\item[(c)] $u^*$ is the point in the singular locus of $\sigma(X)_\RR$ that is closest to $u$, but it is not in $X$.
\vspace{-0.1in}
\item[(d)] $u^*$ is the point in $\tau(X)_\RR$ that is closest to $u$.
\vspace{-0.1in}
\item[(e)] $u^*$ is the point in $\epsilon(X)_\RR$ that is closest to $u$.
\end{itemize}

The solutions $u^*$ in cases (d) and (e) are not critical for the
distance function on $\sigma(X)$.
The following theorem shows that case (b) cannot happen. This was proven
for tensors by Stegeman and Friedland \cite[Lemma 3.4]{StFr}.
We   generalize their result to arbitrary varieties.

\begin{theorem} \label{thm:notrank1}
Suppose that $\hat X$ does not lie on a hyperplane in $\RR^{N+1}$.
Let $u \in \mathbb{R}^{N+1}$ be a data point of real $\hat X$-border rank bigger than $r$ 
and $u^* \in \mathbb{R}^{N+1}$ its best approximation of real $\hat X$-border rank at most $r$.
 Then the real $\hat X$-border rank of $u^*$ 
is exactly $r$, not smaller.
\end{theorem}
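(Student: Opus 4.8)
The plan is to argue by contradiction using a first-order perturbation, following the spirit of Stegeman and Friedland but phrased purely in terms of the geometry of the cone $\hat X$. Suppose, contrary to the claim, that the real $\hat X$-border rank of $u^*$ were at most $r-1$. I would then produce a point of real $\hat X$-border rank at most $r$ that is strictly closer to $u$ than $u^*$ is, contradicting the optimality of $u^*$.

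First I would record two preliminary facts. Since $u$ has real border rank bigger than $r$ while $u^*$ has border rank at most $r$, the two points are distinct, so $u - u^* \neq 0$. Next I would show that the hypothesis that $\hat X$ is not contained in a hyperplane forces the real points $\hat X_\RR$ to span $\RR^{N+1}$: if they lay in a real hyperplane $\{x : \langle a, x\rangle = 0\}$, then this linear form would vanish on $\hat X_\RR$, hence on its Zariski closure $\hat X$ (the real points being Zariski dense), contradicting the hypothesis. Consequently $u - u^*$, being nonzero, cannot be orthogonal to all of $\hat X_\RR$, so there is some $y \in \hat X_\RR$ with $\langle u - u^*, y\rangle \neq 0$. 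Since $\hat X$ is a cone, and thus closed under real scalar multiples, I may replace $y$ by $-y$ if necessary and assume $\langle u - u^*, y\rangle > 0$.

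The crux is then a short computation. Writing $u^*$ as a limit of real rank $\leq r-1$ vectors $v_n$, each $v_n + ty$ has real rank at most $r$, so the limit $u^* + ty$ has real $\hat X$-border rank at most $r$ for every $t \in \RR$. Expanding,
\[
\|u - (u^* + ty)\|^2 \,=\, \|u - u^*\|^2 \,-\, 2t\,\langle u - u^*, y\rangle \,+\, t^2\|y\|^2,
\]
which is strictly less than $\|u - u^*\|^2$ for all sufficiently small $t > 0$, because the linear coefficient is negative. This contradicts the assumption that $u^*$ is a closest point of border rank at most $r$. Hence the border rank of $u^*$ cannot drop below $r$, and since it is at most $r$ by construction, it equals $r$ exactly. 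The same estimate goes through verbatim for a weighted Euclidean norm, using the associated inner product in place of the standard one.

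The only delicate point, and the one I would be most careful about, is the border-rank bookkeeping: one must check that appending a single cone term $ty$ to an approximating sequence of rank $\leq r-1$ vectors yields a sequence of rank $\leq r$ whose limit therefore has real $\hat X$-border rank at most $r$. This, together with the spanning property of $\hat X_\RR$ extracted from the non-hyperplane hypothesis, is exactly what makes the first-order decrease available; notably, neither step requires that $u^*$ itself have genuine rank, as opposed to border rank, equal to $r-1$.
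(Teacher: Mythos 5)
Your proof is correct. It rests on the same two pillars as the paper's argument --- the non-degeneracy hypothesis produces a point $y\in\hat X_\RR$ with $\langle u-u^*,y\rangle\neq 0$, and the quadratic expansion of the squared (weighted) distance then yields a strictly better approximation of border rank at most $r$ --- but your packaging is genuinely more streamlined. The paper proceeds in two stages: it first proves the $r=1$ case, showing that the best real rank-one approximation of a nonzero vector $v$ is nonzero by exhibiting the point $\frac{\langle v,x\rangle}{\langle x,x\rangle}x$ at squared distance $\langle v,v\rangle-\frac{\langle v,x\rangle^2}{\langle x,x\rangle}<\langle v,v\rangle$; it then handles $r\geq 2$ by replacing $u^*$ with $u^*+v^*$, where $v^*$ is the \emph{best} rank-one approximation of the residual $v=u-u^*$, which forces a small case split on whether $v\in\hat X$. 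You instead perturb by $ty$ for small $t>0$, i.e.\ you use an arbitrary non-orthogonal ray in the real cone rather than the optimal rank-one correction; this removes the need to invoke existence or nonvanishing of a best rank-one approximation, eliminates the case split, and treats $r=1$ (where $u^*=0$) uniformly with $r\geq 2$. The one point you rightly single out as delicate --- that $u^*+ty=\lim_n(v_n+ty)$ has real border rank at most $r$ when the $v_n$ have real rank at most $r-1$ and $ty\in\hat X_\RR$ --- is exactly the bookkeeping the paper's version also relies on (there for $u^*+v^*$), and your justification of it, together with the Zariski-density argument that $\hat X_\RR$ spans $\RR^{N+1}$, is sound. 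What the paper's version buys is a reusable statement about best rank-one approximations; what yours buys is brevity and fewer existence claims.
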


The best approximation is taken with respect to a weighted Euclidean distance on $\RR^{N+1}$
where all weights are strictly positive.
The impossibility of case (b) is Theorem \ref{thm:notrank1} for $r=2$.

\begin{proof}
We begin with the case $r = 1$. Then $u \notin \hat X$ and we wish to show that its best border rank one approximation $u^*$ is non-zero. By our assumption, there exists a non-zero
vector $x $ in the affine cone 
$ \hat X$ that is not in the hyperplane perpendicular to $u$. 
This means that $\langle u, x \rangle \neq 0$, where the inner product comes from our choice of norm. The point $\,\frac{ \langle u, x \rangle }{ \langle x, x \rangle } x \,$ lies in 
 $\hat X$, and its squared distance to the given data point $u$ is
$$ \hspace{-26ex} \left| \left| \frac{ \langle u, x \rangle }{ \langle x, x \rangle } x - u \right| \right|^2
\,\, =\,\,\, \left\langle \frac{ \langle u,x \rangle } { \langle  x,x \rangle } x - u ,  
 \frac{ \langle u,x \rangle } { \langle  x,x \rangle } x - u \right\rangle $$
$$ \hspace{22ex} \,\, =  \,\, {\left( \frac{ \langle u,x \rangle}{ \langle x,x \rangle } \right)}^2 \langle x,x \rangle - 2  \frac{ \langle u,x \rangle }{ \langle x,x \rangle} \langle u,x \rangle + \langle u,u \rangle\,\, = 
\,\, \langle u,u \rangle - \frac{\langle u,x \rangle ^2}{ \langle x,x \rangle}. $$
This is strictly smaller than $ || u - 0 ||^2 = \langle u, u \rangle$, so the closest point to $u$ on $\hat X$ is non-zero.

We now suppose that $r \geq 2$ and let $u^*$ be the best approximation to $u$ among points of real 
$X$-border rank at most $r$.  We first suppose for contradiction that $u^*$ has real $\hat X$-border rank at most $r-1$. We then construct a strictly better border rank $r$ approximation of $u$ by combining $u^*$ with a best rank one approximation for $u - u^*$. 

The point $v = u - u^*$ is non-zero. Its best real $X$-rank one approximation $v^*$ is also non-zero. 
When $v \notin \hat X$, we use the first paragraph of the proof to see this; otherwise $v^* = v \neq 0$.
The point $u^* + v^*$ still has real $\hat X$-border rank at most $r$, and it is closer to $u$ than $u^*$, since
$$ || u - (u^* + v^* ) ||\, =\, || v - v^* ||\, < \,|| v - 0 ||\, = \,|| v || \,= \, || u - u ^* || .$$
Hence the best approximation to $u$ cannot have
real $\hat X$-border rank strictly less than $r$.
\end{proof}

We have shown that case (b) cannot happen for best approximation by $\rho(X)$.
All of the other four cases (a), (c), (d) and (e) are possible.
Case (a) is the usual best real rank two approximation and it occurs frequently.
Cases (d) and (e) occur for the curve in Example~\ref{spacecurve1}.
We close this section by showing that case (c) occurs for rank two tensor   approximation.

\begin{example} \rm
Let $N=26$ and  fix $X =  {\rm Seg}(\mathbb{P}^2 \times \mathbb{P}^2 \times \mathbb{P}^2 )$.
According to \cite[Cor.~7.17]{MOZ}, the singular locus of $\sigma(X)$ 
has three irreducible components, given by the three permutations of $\PP^2 \times \sigma({\rm Seg}(\PP^2 \times \PP^2))$. These
parametrize tensors $v \otimes M$, where $v \in \RR^3$ and $M$ is a $3 \times 3$-matrix of rank two.
Consider a data point $U = v \otimes M'$ where $M'$ is a general real $3 \times 3$-matrix.
Let $M$ be the best rank two approximation of $M'$. The entries of $v \otimes M$ are three copies of $M$, multiplied by coefficients $v_1$, $v_2$ and $v_3$. The tensor $U^* = v \otimes M$
gives the unique best approximation to $U$ in all three slices, hence $U^*$ is the best approximation to $U$ in $\rho(X)$.
\end{example}

\section{Space Curves} \label{curves}

Blekherman and Sinn \cite[\S 3]{BS} characterized the real rank two locus
$\rho(X)$ for a curve $X$ in the plane $\PP^2$.
In this section we examine the case when $X$
is a curve in $\PP^3$. We assume that $X$ does not
lie in a plane and that $X_\RR$ is Zariski dense in $X$.
The real $X$-rank of a general point $u \in \PP^3_\RR$ is either two or more,
depending on whether the plane curve obtained by
projecting $X$ from $u$ has real singularities or not.
Specifically, any node on the projected curve corresponds to a line
spanned by two real points on $X$ that passes through~$u$.

\begin{figure}[h]
  \begin{center}
  \includegraphics[height=6.2cm]{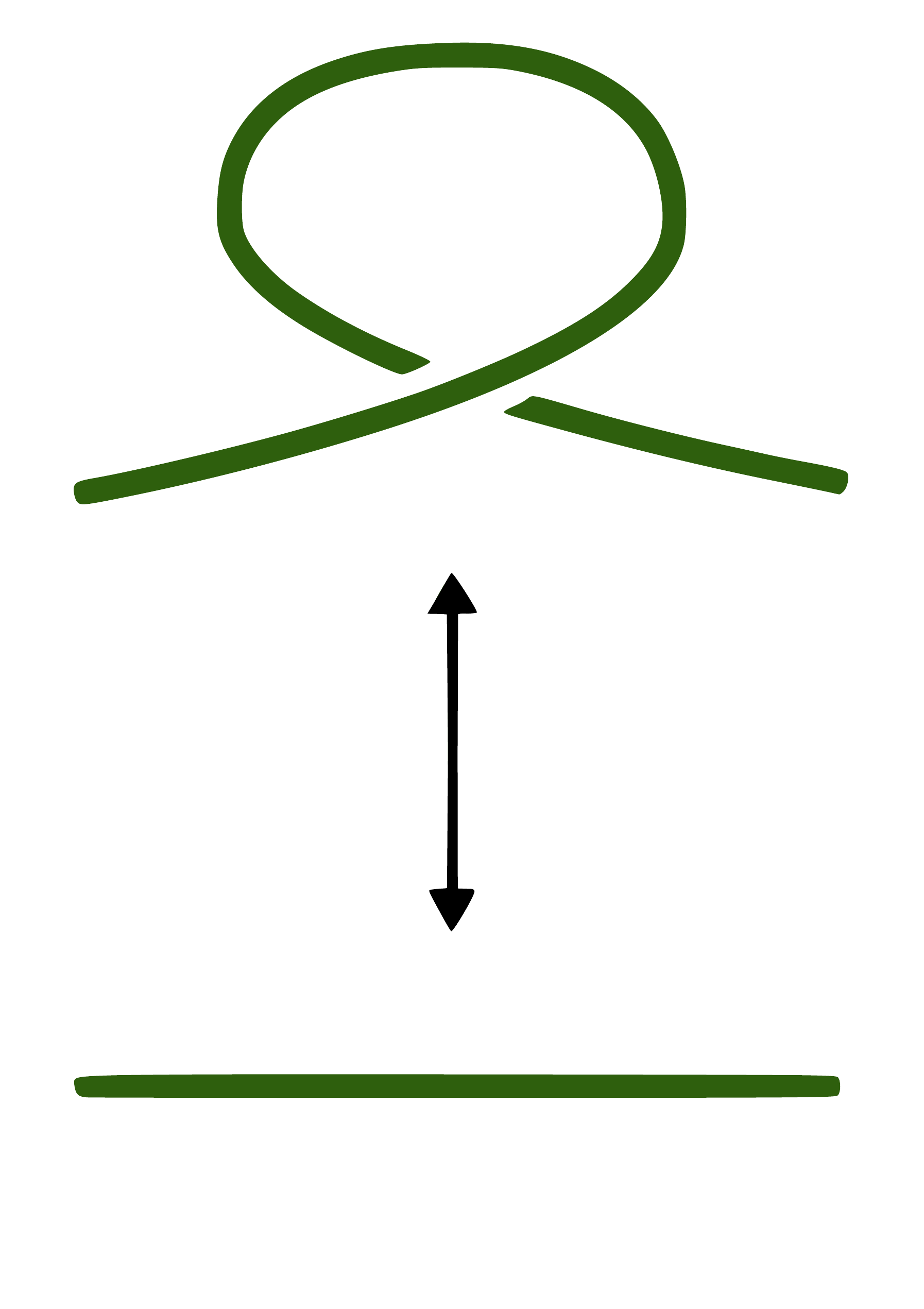}
  \hspace{17ex}
\includegraphics[height=6.2cm]{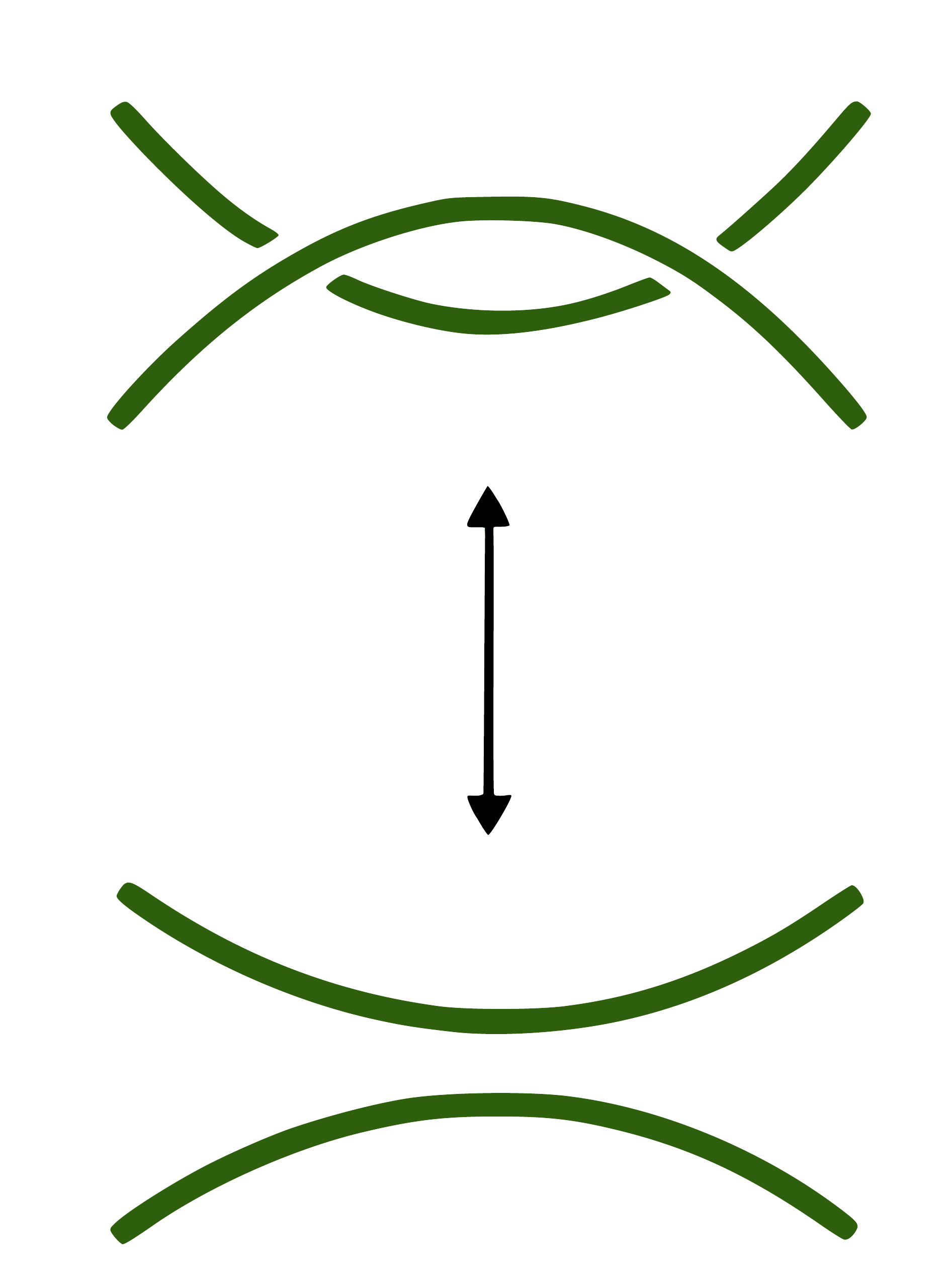}
\vspace{-0.15in}
\end{center}
    \caption{\label{fig:eins} The viewpoint $u$ crosses the tangential surface (left)
    or the edge surface (right). Note that the curve is fixed.
    The arrows indicate the direction of change in viewpoint $u$.
  }
 \end{figure}

\begin{remark}
The locus $\PP^3_\RR \backslash \rho(X)$ of real $X$-border rank $\geq 3$ consists of viewpoints $u$ of crossing-free linear projections of $X_\RR$.
In particular, if $X_\RR$ is a knot  or link, in the usual sense of knot theory,
then every planar projection of $X_\RR$ has a crossing, and hence
  $\rho(X) = \PP^3_\RR$.
\end{remark}

We use classical geometry to
describe the transition between real ranks two and three.
Let $u \in \PP^3_\RR$ and consider the plane curve 
in $\PP^2$ obtained by projecting $X$ from the center $u$.
If $u$ has real $X$-rank two then that plane curve  has a {\em crunode} (ordinary real double point).
As $u$ moves through space and transitions
from real $X$-rank two to real $X$-rank three then  that last crunode disappears. If the transition occurs
via $\tau(X)$ then the intermediate singularity of the projected curve is a cusp.
If it occurs via $\epsilon(X)$ then that singularity is a {\em tacnode}.
The terms ``crunode'' and ``tacnode'' are classical for the relevant real curve singularities.

Figure \ref{fig:eins} shows the transitions as the viewpoint $u$ crosses
the tangential surface $\tau(X)$ and the edge surface $\epsilon(X)$ respectively. The arrows indicate the direction of change in viewpoint of the fixed curve. These are two of the three classical {\em Reidemeister moves}
from knot theory. Transitions via the third Reidemeister move do not cause a change in real $X$-rank.

The edge surface $\epsilon(X)$ plays a prominent role in 
convex algebraic geometry. As shown in \cite{RS2},
it represents the non-linear part in the boundary of the convex hull of $X_\RR$.
See \cite[Figures 1 and 2]{RS2}. In this section  we focus on rational curves.
This allows us to use the methods in \cite[Section 3]{RS2}.
We have the following result about the real rank two boundary.

\begin{proposition}
\label{eq:fourcurves}
There exist rational curves $X_1,X_2,X_3$ and $X_4$ in $\,\PP^3$
such that
$$ 
\partial_{\rm alg}(X_1) = \tau(X_1), \,\,\, 
\partial_{\rm alg}(X_2) = \epsilon(X_2) \cup \tau(X_2),\,\,\, 
\partial_{\rm alg}(X_3) = \emptyset ,\,\,\, {\rm and} \,\,\,
\partial_{\rm alg}(X_4) = \epsilon(X_4).
$$
\end{proposition}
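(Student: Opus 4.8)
The plan is to prove Proposition \ref{eq:fourcurves} by explicit construction: exhibit four rational space curves, one realizing each of the four possible boundary configurations, and verify the claimed equality of $\partial_{\rm alg}$ with the prescribed combination of $\tau(X)$ and $\epsilon(X)$ in each case. By Theorem \ref{thm:main}, we already know $\partial_{\rm alg}(\rho(X)) \subseteq \tau(X) \cup \epsilon(X)$ for any non-defective $X$, and a space curve not contained in a plane is non-defective (so the expected dimensions in (\ref{eq:expdim}) hold and $\sigma(X) = \PP^3$). Thus the entire content of the proposition is to decide, for each component $\tau(X)$ and $\epsilon(X)$, whether it actually appears in the boundary, i.e.\ whether it genuinely separates real rank two from real rank three, rather than lying in the interior of $\rho(X)$ or in the rank-$\geq 3$ locus on both sides.

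First I would parametrize each candidate curve rationally, $X_i = \overline{\{(1 : f_1(t) : f_2(t) : f_3(t))\}}$, and compute its tangential surface $\tau(X_i)$ and edge surface $\epsilon(X_i)$ using the methods of \cite[Section 3]{RS2}; for rational curves these surfaces have explicit implicit equations obtainable by elimination. The key diagnostic, following the discussion preceding the proposition, is the \emph{local crossing analysis of the projected plane curve}: a viewpoint $u$ lies in $\rho(X)$ iff the projection of $X_\RR$ from $u$ has a real node (crunode), and the transition to real rank three occurs through a cusp when crossing $\tau(X)$ and through a tacnode when crossing $\epsilon(X)$. For each surface I would therefore pick a smooth real point of the surface and examine whether, on the two sides of the surface, the count of real crunodes of the projected curve changes parity—equivalently, whether a real secant pair genuinely appears or disappears, as opposed to a complex-conjugate pair passing through without affecting real rank. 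A component contributes to $\partial_{\rm alg}$ precisely when this real count changes across it.

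Concretely, for $X_1$ I would choose a curve (e.g.\ a twisted-cubic-like or rational quartic) whose only real-rank transition is cuspidal, so that $\epsilon(X_1)_\RR$ either is empty or separates two regions of equal real rank, forcing $\partial_{\rm alg}(X_1) = \tau(X_1)$; the curve in Example \ref{spacecurve1} (promised later) is the natural source for $X_2$, where both moves occur. For $X_3$ I would seek a curve, such as a suitable knot-free closed rational curve whose every planar projection has a crossing, giving $\rho(X_3) = \PP^3_\RR$ with empty boundary; and for $X_4$ a curve engineered so that only the tacnodal (edge) transition bounds the rank-two region, with $\tau(X_4)_\RR$ lying in the interior or carrying no real sign change. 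The verification in each case reduces to checking real solvability of the relevant secant/tangent equations on either side of the surface, plus confirming that $X_4$ is non-defective so that the $\tau$-free boundary is still genuine.

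The main obstacle I expect is the $X_4$ case: producing a rational space curve for which the tangential surface is \emph{present as a complex component} of the secant structure yet contributes nothing to the \emph{real} boundary requires arranging that crossing $\tau(X_4)_\RR$ never changes the number of real crunodes—so the tangent-line degeneration there always involves a complex-conjugate secant pair merging, not a real one. Controlling this real behaviour along the whole tangential surface, rather than just at one test point, is delicate; I would handle it by exploiting a symmetry or an explicit sign analysis of the discriminant of the projected curve's node equation, confirming that the real discriminant locus that bounds the crunode region coincides with $\epsilon(X_4)_\RR$ alone. The remaining cases are comparatively routine local computations once the curves are chosen, since Theorem \ref{thm:main} already pins the algebraic boundary inside $\tau \cup \epsilon$ and leaves only a finite check of which real pieces separate the two rank regimes.
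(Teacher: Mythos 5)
Your overall strategy coincides with the paper's: exhibit four explicit rational curves and check which of $\tau$ and $\epsilon$ actually separates real rank two from real rank three. Your choices for $X_1$, $X_2$, $X_3$ essentially match the paper (twisted cubic for $X_1$, the quartic of Example~\ref{spacecurve1} for $X_2$, a knotted curve for $X_3$ --- note that you want a \emph{knotted} curve, not a ``knot-free'' one, since it is the nontriviality of the knot that forces every planar projection to have a crossing and hence $\rho(X_3)=\PP^3_\RR$). For $X_1$ the paper has a cleaner route than your proposed sign check: the twisted cubic is a Veronese variety, its secant variety is identifiable so $\epsilon(X_1)$ does not exist, and Theorem~\ref{cor:segver} gives $\partial_{\rm alg}=\tau$ directly.

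The genuine gap is $X_4$. You correctly identify it as the hard case, but you do not actually produce a curve; ``engineered so that only the tacnodal transition bounds the rank-two region'' and ``exploiting a symmetry or an explicit sign analysis'' is a wish list, not a construction, and the existence of such a curve is precisely the nontrivial content of this part of the proposition. The paper's key idea, which your proposal is missing, is to first build a \emph{piecewise-linear} model: a closed curve running along six edges of the unit cube with a small detour inserted in the middle third of each segment, for which one can see directly that from the few crossing-free viewpoints all crossings are gained in pairs (the edge/tacnode move of Figure~\ref{fig:eins}), never singly via a cusp. One then invokes the Weierstrass Approximation Theorem to replace this piecewise-linear curve by a rational algebraic curve with the same real crossing behaviour. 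A second point you omit: to conclude $\partial_{\rm alg}(X_4)=\epsilon(X_4)$ rather than a proper subvariety, one must rule out the boundary being only some irreducible components of a reducible edge surface; the paper handles this by choosing the approximating parametrization generically so that $\epsilon(X_4)$ is irreducible, using \cite[equation (3.6)]{RS1}. Without both of these ingredients your argument for $X_4$ does not close.
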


\begin{proof}
By Theorem~\ref{cor:segver},
the twisted cubic curve in Example \ref{ex:binarycubic1} can serve as the curve $X_1$.
The quartic curve in Example \ref{spacecurve1} serves
as  $X_2$. For $X_3$ we  take the Morton curve discussed 
in \cite[Example 4.4]{RS2}. This is rational of degree six
and forms a trefoil knot \cite[Figure 3]{RS2}. 

Rational curves $X_4$ in $\PP^3_\RR$ with  $\partial_{\rm alg}(X_4) = \epsilon(X_4)$
are a bit harder to find. A piecewise-linear
connected example, resembling a 3D Peano curve, can be constructed in two steps. First, we make a curve from six edges of the unit cube. Starting from $(0,0,0)$, the curve travels to $(1,1,1)$ via intermediate vertices $(1,0,0)$ and $(1,1,0)$, and then loops back to $(0,0,0)$ via intermediate vertices $(0,1,1)$ and $(0,0,1)$. In the middle third of each line segment we insert a piecewise linear detour of height $\frac{1}{2}$ in the direction of the next segment.
Four views of this space curve are shown in Figure \ref{x4}.
There are relatively few viewpoints from which the curve has no crossings. From such positions, crossings are always gained in pairs, via transitions along edges,
as shown on the right in Figure \ref{fig:eins}.

The existence of a rational algebraic curve
$X_4$ with the same property can be concluded from the Weierstrass Approximation Theorem. To exclude the possibility that the algebraic boundary is strictly contained in the edge variety, it suffices to show the existence of an approximating curve 
whose edge variety is irreducible.
 This can be ensured using \cite[equation (3.6)]{RS1}, as the rational curve $X_4$
 can be parametrized by  sufficiently generic polynomials.
\end{proof}

\begin{figure}
  \begin{center}
  \includegraphics[width=16cm]{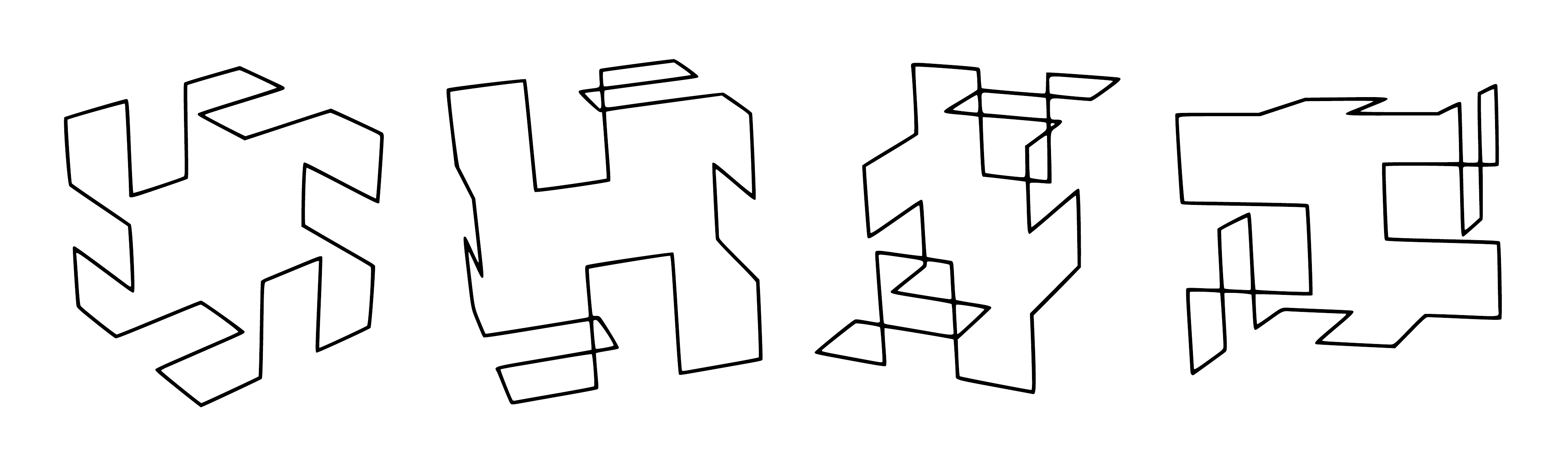}
    \caption{\label{x4}The space curve $X_4$ from Proposition \ref{eq:fourcurves}, as seen from four different angles.}
  \end{center}
  \vspace{-0.15in}
 \end{figure}

In what follows we review the techniques in \cite[pages 7-9]{RS2},
and we show how they can be adapted
for computing rank two decompositions.
Suppose that $X$ is a rational curve of degree $d $ that spans
$\PP^3$.  Note that $\sigma(X) = \PP^3$.
The curve $X$ has a rational parametrization
$$ \PP^1 \rightarrow \PP^3, \,\,
(s:t) \mapsto \bigl(F_0(s,t): F_1(s,t): F_2(s,t): F_3(s,t) \bigr). $$
Here, $F_0,F_1,F_2,F_3$ are binary forms of degree $d$.
Two points $(s_1:t_1)$ and $(s_2:t_2)$ in $\PP^1$ parametrize two distinct points on the curve $X$,
namely
$$ \begin{matrix} & \bigl(F_0(s_1,t_1): F_1(s_1,t_1): F_2(s_1,t_1): F_3(s_1,t_1) \bigr) \\
\quad {\rm and} & \bigl(F_0(s_2,t_2): F_1(s_2,t_2): F_2(s_2,t_2): F_3(s_2,t_2) \bigr).
\end{matrix}
$$
The secant line spanned by these two points in $\PP^3$ 
is characterized by its vector of {\em Pl\"ucker coordinates}
$(p_{01}: p_{02} : p_{03} : p_{12}: p_{13} : p_{23}) \in \PP^5$. These coordinates are
\begin{equation}
\label{eq:plucker}
 p_{ij} \,\, = \,\, \frac{F_i(s_1,t_1) F_j(s_2,t_2) - F_i(s_2,t_2) F_j(s_1,t_1)}{ s_1 t_2-s_2t_1}  \qquad
\,{\rm for} \,\,\, 0 \leq i < j \leq 3  .
\end{equation}
Each Pl\"ucker coordinate $p_{ij}$ is invariant under swapping $(s_1:t_1)$ and $(s_2:t_2)$.
We wish to express $p_{ij}$ as a function of the unordered pair $\{(s_1:t_1),(s_2:t_2)\}$. The two points in $\PP^1$ are represented by the two linear factors of a binary quadric 
\begin{equation}
\label{eq:binaryquadric}
 a x^2 + b xy + c y^2 \,\, = \,\, (s_1x + t_1y)(s_2x+t_2 y). 
 \end{equation}
 We can use equation \eqref{eq:binaryquadric} to write $p_{ij}$
as a homogeneous polynomial of degree $d-1$ in $(a,b,c)$.
The resulting formulas define a rational map 
from $ \PP^2 = {\rm Sym}_2(\PP^1) $
into the Grassmannian of lines $ {\rm Gr}(1,\PP^3)$.
This parametrizes the secant lines.
Standard properties of the Pl\"ucker coordinates imply
that the points $(w:x:y:z) \in \PP^3$ on a particular secant line are the solutions of the linear system of equations
\begin{equation}
\label{eq:4by4} \begin{pmatrix}
 0 &  \phantom{-} p_{23} & -p_{13} & \phantom{-} p_{12}\, \\
-p_{23} &  0 & \phantom{-} p_{03} & -p_{02} \,\\
\phantom{-} p_{13} & -p_{03} & 0 &  \phantom{-} p_{01} \,\\
-p_{12} & \phantom{-}   p_{02} & -p_{01} & 0 \,\\
\end{pmatrix}
\cdot
\begin{pmatrix} w \\ x \\ y \\ z \end{pmatrix} \,\,=\,\,
\begin{pmatrix} 0 \\ 0 \\ 0 \\ 0 \end{pmatrix}.
\end{equation}
As seen from our parametrization in (\ref{eq:plucker}) and
(\ref{eq:binaryquadric}), this is a system of equations of bidegree $(d-1,1)$
in  the pair $\bigl((a,b,c), (w,x,y,z) \bigr)$. They define a threefold in
$\PP^2 \times \PP^3$.  The $X$-rank two decompositions of a data vector $ (w,x,y,z)$  in $\RR^4$ are its fiber under the map that projects the threefold
onto the second factor $\PP^3$.
To be concrete,  given real numbers $w,x,y,z$,
we plug them into  the system (\ref{eq:4by4}).
This yields  four homogeneous equations of degree $d-1$ in
three unknowns $a,b,c$, of which two are linearly independent. The $X$-rank two decomposition defined by a triple $(a,b,c)$ is then obtained from equation \eqref{eq:binaryquadric}. The reality of the $X$-rank two decomposition is determined as follows.

\begin{proposition}
\label{eq:solveforabc}
The point  $u = (w : x : y : z)$ has real $X$-rank $\leq 2$ if and only if
the system (\ref{eq:4by4}) has a real solution $(a:b:c)$
such that the matrix in (\ref{eq:4by4}) is non-zero and
the discriminant $b^2 - 4ac$ of the quadric
(\ref{eq:binaryquadric}) is positive. Such points $(a:b:c) \in \PP^2_\RR$ are in bijection with
lines in $\PP^3_\RR$ that pass through $u$ and meet 
the curve $X$ in two real points.
\end{proposition}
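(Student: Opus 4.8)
The plan is to build the classical correspondence $\PP^2 = \mathrm{Sym}_2(\PP^1)$ and read reality off the discriminant. I would first record the elementary fact that a real binary quadric $ax^2+bxy+cy^2$ factors into two distinct real linear forms exactly when $b^2-4ac>0$, into a real double form when $b^2-4ac=0$, and into a complex-conjugate pair when $b^2-4ac<0$. Through the factorization \eqref{eq:binaryquadric}, a real triple $(a:b:c)$ thus encodes an unordered pair of points of $\PP^1$ that consists of two distinct real points precisely when $b^2-4ac>0$. Pushing these two parameters forward along $(s:t)\mapsto (F_0:\cdots:F_3)$ yields two real points of $X$, and the Pl\"ucker vector \eqref{eq:plucker} attached to $(a:b:c)$ is exactly the line they span; this vector is the skew-symmetric matrix in \eqref{eq:4by4}. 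Because the $p_{ij}$ are the $2\times 2$ minors of the $2\times 4$ coordinate matrix of the two image points, the matrix vanishes iff those points coincide in $\PP^3$, so requiring it to be non-zero guarantees that $(a:b:c)$ encodes a genuine secant line through two distinct points of $X$. Finally, by the standard incidence relation for Pl\"ucker coordinates, the kernel of the matrix in \eqref{eq:4by4} is exactly the set of points of $\PP^3$ on the line it represents; hence a real $(a:b:c)$ solves \eqref{eq:4by4} for a fixed $u=(w:x:y:z)$ if and only if $u$ lies on the secant line indexed by $(a:b:c)$.

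With this dictionary in place, both implications follow. For the ``if'' direction, a real solution with non-zero matrix and $b^2-4ac>0$ produces two distinct real points $p_1,p_2\in X_\RR$ whose span contains $u$, and writing $u$ as a real combination of their cone representatives exhibits real $X$-rank $\le 2$. For the ``only if'' direction, real $X$-rank $\le 2$ means $u=x_1+x_2$ with $x_1,x_2\in\hat X_\RR$, so $u$ lies on the real line joining the points $p_1,p_2\in X_\RR$ that these represent; the real quadric whose roots are the parameters of $p_1$ and $p_2$ then supplies a real $(a:b:c)$ with $b^2-4ac>0$ and non-zero matrix solving \eqref{eq:4by4}. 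When $p_1=p_2$, that is, when $u$ already lies on $X$, I would instead join $p_1$ to any other real point of $X$, which exists because $X_\RR$ is Zariski dense, and note that $u$ still lies on the resulting chord. The asserted bijection is then exactly this dictionary: $(a:b:c)$ maps to the line through the two real points it encodes, with inverse sending a real line through $u$ that meets $X$ in two real points to the real quadric cutting out those two parameters.

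The step I expect to demand the most care is the bookkeeping at the degenerate strata, confirming that each hypothesis in the statement does exactly one job. The strict inequality $b^2-4ac>0$, rather than $b^2-4ac\ge 0$, excludes the diagonal locus $b^2-4ac=0$ of tangent lines, which contribute real \emph{border} rank two but no honest decomposition into two real points, while the non-vanishing of the matrix excludes pairs of distinct parameters sharing a common image, that is, nodes of the parametrization. Upgrading the correspondence from a surjection to a genuine bijection further requires that the two real points of $X$ on a relevant chord come from real parameters, so that lines meeting $X$ in two real branches match real quadrics; the isolated real points, or acnodes, whose parameters form a complex-conjugate pair, are the delicate exceptions, and I would dispose of them by a genericity or closure argument.
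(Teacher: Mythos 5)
Your argument is correct and is essentially the route the paper takes: the proposition is stated there without a separate proof, being read off directly from the preceding Pl\"ucker-coordinate dictionary between real binary quadrics $(a:b:c)$ with $b^2-4ac>0$ and secant lines through pairs of real parameter values, which is exactly what you unwind (including the roles of the strict inequality and of the non-vanishing of the skew-symmetric matrix). The caveat you flag at the end --- real points of $X$ whose only preimages are a complex-conjugate pair of parameters --- is indeed the implicit hypothesis needed for the ``only if'' direction and the bijection, but it is vacuous for an injective real parametrization such as the smooth monomial curve of Example~\ref{spacecurve1}, which is the setting in which the proposition is applied.
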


The boundary $\partial(\rho(X))$  marks the transition between
systems (\ref{eq:4by4}) that admit solutions as described
in Proposition~\ref{eq:solveforabc} and those that do not.
The discriminantal surface in $\PP^3$ that separates 
real $X$-rank $\leq 2$ from real $X$-rank $\geq 3$ can 
have components contributed by both the tangential variety
$\tau(X)$ and the edge surface $\epsilon(X)$.
These are ruled surfaces.
The lines in the rulings are represented by curves
in the plane $\PP^2$ with coordinates $(a:b:c)$.
To obtain the surface from each curve, we compute 
its image under the correspondence (\ref{eq:4by4}).

The first relevant curve is the conic $b^2-4ac$. This encodes binary quadrics (\ref{eq:binaryquadric})
with a double root, so its image in $\PP^3$ is the
tangential surface $\tau(X)$. The second relevant curve
has degree $2(d-3)$. Its defining polynomial $\Phi(a,b,c)$ was
constructed in \cite[equation (3.6)]{RS2}. The image of the curve
$\Phi=0$ under the correspondence (\ref{eq:4by4})
is the edge surface $\epsilon(X)$ in $\PP^3$.

\begin{example}\label{spacecurve1} \rm
Let $d=4$ and fix the smooth monomial curve $X$ in $\PP^3$ with parametrization
\begin{equation}
\label{eq:paracurve}
 F_0 = s^4, \,\, F_1 = s^3 t , \,\, F_2 = s t^3, \, \, F_3 = t^4. 
\end{equation}
The parametrization  (\ref{eq:plucker})
of the secant lines of $X$ 
in terms of Pl\"ucker coordinates  is
$$
\begin{matrix}
p_{01}  & = & a^3 ,& \phantom{momo} 
p_{02} & = & a (b^2-ac), & \phantom{momo} 
p_{03} & = & b(b^2-2ac), \\
 p_{12} & =  & abc  ,& \phantom{momo} 
  p_{13} & = & c(b^2-ac), & \phantom{momo} 
  p_{23} & = & c^3 .\\
\end{matrix}
$$
The secant correspondence in $\PP^2 \times \PP^3$ is obtained by
substituting these expressions into (\ref{eq:4by4}), and saturating
with respect to the $p_{ij}$. Its map onto $\PP^3$ has degree three.
A general point $u = (w:x:y:z)$ in $\PP^3$ lies on three
secants, each represented by a point $(a:b:c)$.
The semi-algebraic set $\rho(X)$ consists of points where
at least one of the three secants is real and meets
$X$ in two real points. Algebraically, 
we desire that $a,b,c$ are real and satisfy $b^2 \geq 4ac$.

The tangential surface $\tau(X)$ has degree $6$. We compute its defining equation
as follows. First add $\,b^2-4ac\,$ to the ideal in (\ref{eq:4by4}), then saturate by
the entries of the skew-symmetric $4 \times 4$-matrix, and finally eliminate the unknowns
$a,b,c$. The result is the polynomial
\begin{equation}
\label{eq:tangential4}
16x^3y^3-27w^2y^4+6wx^2y^2z-27x^4z^2+48w^2xyz^2-16w^3z^3.
\end{equation}
The edge surface $\epsilon(X)$ has degree $6$ as well. 
Following \cite[equation (3.6)]{RS2}, it is encoded 
by the plane quadric $\, \Phi(a,b,c) \, = \, b^2 + 2ac$.
The same elimination process yields the polynomial
\begin{equation}
\label{eq:edge4}
32 x^3 y^3-27 w^2 y^4-6 wx^2y^2z - 27x^4z^2+24w^2xyz^2+4w^3z^3.
\end{equation}
The ruled sextic surfaces  (\ref{eq:tangential4}) and (\ref{eq:edge4})
divide $\PP^3_\RR$ into various
connected components. The real rank two locus $\rho(X)$ is the union of
the components whose points obey
Proposition \ref{eq:solveforabc}.

We claim that $\rho(X)$ is a proper subset of $\PP^3_\RR$ and that
both the edge surface  $\epsilon(X)$ and the tangential surface $\tau(X)$
 contribute to the real rank boundary $\partial(\rho(X))$.
 To prove this claim, we consider the line segment in $\PP^3_\RR$ whose points $u(t)$
 are given by the parametrization
$$   w =  84 - 74t,\,\,  x =  13 + 59t,\,\,  y =  62 - 19t, \,\, z = -38 - 10t. $$
Here $t$ is a real parameter that runs from $0$ to $1$.
By substituting into (\ref{eq:tangential4}) and (\ref{eq:edge4}) respectively, we find that the
 line segment crosses the tangential surface $\tau(X)$ twice, namely when
$$ t_1 = 0.41616468475415957221  \quad {\rm and} \quad  t_3 = 0.64786245578375696533. $$
It also crosses the edge surface $\epsilon(X)$ twice, namely at the points
$u(t_2)$ and $u(t_4)$ given by
$$ t_2 = 0.50734775284175190900  \quad {\rm and} \quad  t_4 = 0.81105706603104911043. $$
The above expressions are numerical approximations to the $t_i$. The true values have algebraic degree six, which is the degree of the surfaces $\tau(X)$ and $\epsilon(X)$. 
The $t_i$ cannot be expressed in terms of radicals over $\mathbb{Q}$  because
the Galois group is the symmetric group on six letters.

The value of the parameter $t$ divides the line segment into five smaller segments on which
the corresponding secant lines of X have constant real behavior. Computations reveal:
\begin{itemize}
\item For $0 < t < t_1$, the real $X$-rank of $u(t)$ is $3$. One of the three complex 
 secant lines is real but it does not meet the curve $X$ in real points.
\item For $ t_1 < t < t_2 $, the real $X$-rank of $u(t)$ is $2$. One of the three complex 
 secant lines is real and it meets the curve $X$ in two real points.
\item For $t_2 < t < t_3$, the real $X$-rank of $u(t)$ is $2$. All the three complex 
 secant lines are real and they all meet the curve $X$ in two real points.
\item For $t_3 < t < t_4$, the real $X$-rank of $u(t)$ is $2$. All the three complex 
 secant lines are real but only two of them meet the curve $X$ in two real points.
\item For $t_4 < t < 1$ the real $X$-rank of $u(t)$ is $3$. One of the three complex 
 secant lines is real but it does not meet the curve X in real points.
 \end{itemize}
 This verifies that  both of the transitions depicted in Figure \ref{fig:eins}
do occur along this line segment.
  At $t = t_1$ the real $X$-rank changes by crossing the tangential surface,
   and at $t = t_4$ it changes by crossing the edge surface.
Additional crossings of the two boundary surfaces take place at $t=t_3$ and
at $t=t_2$, but these do not
change the real $X$-rank of $u(t)$.
   
   We finally note that both of the two scenarios (d) and (e) 
   for rank two approximation,   discussed prior to
   Theorem \ref{thm:notrank1}, are realized for $X$ along this line segment.
   Namely, for sufficiently small $\epsilon > 0$, 
   we obtain (d) for    $u=u(t_1 - \epsilon)$,
   and we obtain (e) for $ u=u(t_4  + \epsilon)$.
\end{example}
 
\section{Tensors and their Hyperdeterminants} \label{sec:tensors}

The varieties $X$  whose ranks are
most relevant for applications are the Segre variety and the Veronese variety. When studying tensors of format $n_1 \times n_2 \times \cdots \times  n_d$,
we set $N = n_1 n_2 \cdots n_d - 1$ and 
$X \subset \PP^N$ is the {\em Segre variety} whose points are tensors of rank one.
When  studying symmetric tensors of format
$n \times n \times \cdots \times n$ with $d$ factors, we set
$N = \binom{n+d-1}{d} - 1$ and $X \subset \PP^N$ is the
{\em Veronese variety} whose points are symmetric tensors of rank one.
These two classical varieties $X$ are non-defective provided $d \geq 3$. 
We exclude the case $d=2$ because the corresponding varieties of rank one matrices are defective.

For any variety $X$ as before,
the degree of the natural parametrization
 of its secant variety $\sigma(X)$ gives the number of rank two decompositions of a generic 
 point. It is the integer $k$ in the proof of Theorem \ref{thm:main}. If the parametrization
 is birational ($k=1$) then $\sigma(X)$ is said to be
 {\em identifiable}. If the secant variety $\sigma(X)$ is identifiable
 then there is no edge variety $\epsilon(X)$.
  
  \begin{remark} \label{taugeneral} \rm
  It is natural to wonder whether    $\tau(X)_\RR$ is
  always contained in the real rank two locus  $\rho(X)$.
    Furthermore, if $\sigma(X)$ is identifiable, 
    then    $\tau(X)_\RR \subseteq \partial(\rho(X))$ seems plausible.
    This would be true if  every transition through $\tau(X)_\RR$ were as in Case 1 of
    Theorem~\ref{thm:main}.  However, this may be false.
    For instance, consider a smooth space curve $X$ as in Section~\ref{curves}.
     The tangential surface  $\tau(X)_\RR$ can look locally like a {\em Whitney umbrella}.
     It might have lower-dimensional
    real pieces that protrude into the interior of $\rho(X)$ or its complement.
If $\sigma(X)$ is not identifiable then the interior of $\rho(X)$ can  contain a
region of $\tau(X)_\RR$ that is Zariski dense in $\tau(X)$. The point $u(t_3)$ in  Example~\ref{spacecurve1}
lies inside such a region of the tangential surface.
    \end{remark}
  
We now focus on the case of tensors, where $X$ is a Segre of Veronese variety. Here, the secant variety is usually identifiable, and Remark \ref{taugeneral} can be strengthened as follows.

\begin{lemma}\label{tautensor}
Let $X$ be a Segre variety or Veronese variety with $d \geq 3$. Then the
real tangential variety is contained in the real rank two locus;
in symbols, $\tau(X)_\RR \subseteq \partial(\rho(X))$.
\end{lemma}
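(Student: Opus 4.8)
The plan is to unpack the statement into two assertions about a generic real point $u^*\in\tau(X)_\RR$: that $u^*$ lies in $\rho(X)$, and that $u^*$ does not lie in the relative interior of $\rho(X)$. Since $\partial(\rho(X))=\rho(X)\setminus\mathrm{relint}(\rho(X))$ is Euclidean-closed, it then suffices to verify both properties on a Euclidean-dense subset of $\tau(X)_\RR$ and pass to the closure. I would therefore fix a generic real point of $\tau(X)$, meaning one in a suitable Zariski-open subset, and argue there.

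For the inclusion $u^*\in\rho(X)$ I would show that a generic real point of $\tau(X)$ lies on a genuine real tangent line, so that $u^* = p + s\,\xi$ with $p\in X_\RR$ a smooth real point, $\xi=\gamma'(0)$ the velocity of a real curve $\gamma$ in $X_\RR$ with $\gamma(0)=p$, and $s\in\RR$. Then $u^* = \lim_{\epsilon\to0}\big[(1-\tfrac{s}{\epsilon})\gamma(0)+\tfrac{s}{\epsilon}\gamma(\epsilon)\big]$ is a Euclidean limit of points on real secant lines through $\gamma(0),\gamma(\epsilon)\in X_\RR$, hence $u^*\in\rho(X)$. The nontrivial input is the reality claim that every generic real point of $\tau(X)$ comes from real data. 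For the Veronese this is unique factorization in the polynomial ring: a real point of $\tau(\nu_d(\PP^{n-1}))$ has the shape $\ell^{d-1}m$, and since $d\ge 3$ the factor $\ell$ has multiplicity $d-1\ge2$ and is thus the unique linear factor of multiplicity $\ge2$; complex conjugation permutes factors preserving multiplicities, so it fixes $\ell$ up to scalar, forcing $\ell$, and then $m$, to be real. This is precisely where $d=2$ fails, since $\ell m$ can be a conjugate pair. For the Segre I would run the analogous argument using identifiability of the tangent line of a generic point of $\tau(X)$, which forces the underlying data to be real whenever the tensor is.

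To see that $u^*$ is not interior I would invoke the generic identifiability of $\sigma(X)$, available for these $X$ when $d\ge3$, so that a generic point of $\sigma(X)$ has a unique complex rank-two decomposition ($k=1$ in the proof of Theorem \ref{thm:main}). Then $u^*$ is exactly a Case~1 transition point of that proof: it is the limit of a pair of real points of $X$ merging to a single point, and on crossing $\tau(X)$ this unique decomposition becomes a complex-conjugate pair. With $k=1$ there is no second real pair that could keep nearby points in $\rho(X)$, so points just on the far side of $\tau(X)$ have real $X$-rank exceeding two. Hence every neighborhood of $u^*$ in $\sigma(X)_\RR$ meets the complement of $\rho(X)$, so $u^*\notin\mathrm{relint}(\rho(X))$, and therefore $u^*\in\partial(\rho(X))$.

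Finally I would upgrade from generic to all of $\tau(X)_\RR$. Because the real points of $X$ are Zariski dense, the real tangent points are Zariski dense in the irreducible variety $\tau(X)$, the smooth real locus is Euclidean dense in $\tau(X)_\RR$, and the genericity conditions are Zariski-open; hence the generic real tangent points are Euclidean dense in $\tau(X)_\RR$. As they all lie in the closed set $\partial(\rho(X))$, taking closures gives $\tau(X)_\RR\subseteq\partial(\rho(X))$. I expect the main obstacle to be the reality step of the second paragraph — certifying that a generic real point of $\tau(X)$ truly arises from a real tangent line, cleanly via factorization for the Veronese but requiring identifiability of the tangent line for the Segre — together with ensuring that the generic identifiability of $\sigma(X)$ is genuinely in force, since it is exactly this $k=1$ property (absent in the non-identifiable space-curve setting of Remark \ref{taugeneral}) that prevents $\tau(X)_\RR$ from protruding into the interior of $\rho(X)$.
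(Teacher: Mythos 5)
Your proof is essentially correct and rests on the same two pillars as the paper's: a real point of $\tau(X)$ is a Euclidean limit of points on real secant lines (so it lies in $\rho(X)$), and it is also a limit of points whose unique rank-two decomposition is a complex-conjugate pair (so it is not interior), with identifiability/Kruskal uniqueness certifying the second family. The packaging differs in two ways. First, you argue at a generic point of $\tau(X)_\RR$ and pass to the Euclidean closure, whereas the paper works with an arbitrary real point $T$ directly, writing it via Landsberg's formula as $\sum_j x_1\otimes\cdots\otimes y_j\otimes\cdots\otimes x_d$ and exhibiting two explicit sequences: $a_n\to T$ on the real secant line through $x_1\otimes\cdots\otimes x_d$ and $\bigotimes_j(x_j+\tfrac1d y_j)$, and $b_n\to T$ on the secant line spanned by the conjugate pair $\bigotimes_j(x_j\pm\tfrac{1}{di}y_j)$; the paper then needs a separate degenerate case ($T=M\otimes x$ when at most two $y_j$ are nonzero, where Kruskal does not apply), which your generic-plus-closure route avoids. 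Second, you add a step the paper silently assumes, namely that a real point of $\tau(X)$ arises from \emph{real} tangent data; your unique-factorization argument for the Veronese (the multiplicity-$(d-1)$ factor of $\ell^{d-1}m$ is conjugation-invariant since $d\ge 3$) is a genuine improvement in rigor there. The one place where your write-up is thinner than the paper's is the ``far side'' step: you assert that crossing $\tau(X)$ flips the unique decomposition from a real pair to a conjugate pair, which is the discriminant picture and needs a justification (why could the two merging points not separate again into a real pair?). The cleanest fix is exactly the paper's $b_n$: conjugate pairs of points of $X_\RR\otimes\CC$ near the diagonal give secant-line points accumulating at $u^*$, and by $k=1$ these have no real decomposition; this replaces the transversality heuristic by an explicit witness. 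Modulo that one point, and the shared (minor) gap in both arguments between ``real rank $>2$'' and ``not in $\rho(X)$'' (i.e.\ real border rank $>2$), your proof goes through.
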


\begin{proof}
Let $T$ be a real point in $\tau(X)$. It is expressible as a sum of $d$ rank one tensors,
$$ T = y_1 \otimes x_2 \otimes \cdots \otimes x_d + x_1 \otimes y_2 \otimes x_3 \otimes \cdots \otimes x_d + \cdots + x_1 \otimes \cdots \otimes x_{d-2} \otimes y_{d-1} \otimes x_d + x_1 \otimes \cdots \otimes x_{d-1} \otimes y_d ,$$
where we omit the subscripts for the Veronese case.
This representation is derived in \cite{Lan}.
Direct computations shows that there exists a sequence of tensors $a_n \to T$ where each $a_n$ lies on the secant line spanned by $(x_1 + \frac{1}{d} y_1) \otimes (x_2 + \frac{1}{d} y_2) \otimes \cdots \otimes (x_d + \frac{1}{d}y_d)$ and $x_1 \otimes x_2 \otimes \cdots \otimes x_d$. There also exists a sequence of tensors $b_n \to T$ where each $b_n$ lies on the secant line spanned by $(x_1 + \frac{1}{di} y_1) \otimes (x_2 + \frac{1}{di} y_2) \otimes \cdots \otimes (x_d + \frac{1}{di} y_d)$ and $(x_1 - \frac{1}{di} y_1) \otimes (x_2 - \frac{1}{di} y_2) \otimes \cdots \otimes (x_d - \frac{1}{di} y_d)$. 
Here $i  =\sqrt{-1}$.
See Example~\ref{2x2x2x2} for the case when $X$ is a rational normal curve.
By Kruskal's Theorem \cite[\S 3.2]{KB}, these real (resp. complex) expressions for $a_n$ (resp. $b_n)$ are unique, whenever three or more $y_i$ are non-zero. Therefore, $T$ is both a limit of real rank two tensors, and a limit of tensors that are not in $\rho(X)$, hence it lies in the boundary $ \partial (\rho(X))$.

It remains to consider the case when at most two $y_i$ are non-zero. Then $T = M \otimes x$ with $M$ a matrix and $x$ a rank one tensor. One can construct sequences of rank one tensors, $\alpha_n , \beta_n \to x$, with $\alpha_n$ real and $\beta_n$ complex by perturbing $x$ by arbitrarily small real (resp. complex) rank one tensors. Then $a_n = M \otimes \alpha_n \to T$ and $b_n = M \otimes \beta_n \to T$ are real and complex sequences respectively, and we conclude as above.\end{proof}

We have the following characterization of the algebraic real rank two boundary for tensors.

  \begin{theorem} \label{cor:segver}
 Let $X$ be the Segre variety (resp.~the Veronese variety) whose points are
 $d$-dimensional   tensors (resp.~symmetric tensors)  of rank one.  If $d \geq 3$ then 
 the algebraic real rank two boundary of $X$ is non-empty and
    equals the tangential variety of $X$. In symbols,
 $$ \partial_{\rm alg}(\rho(X)) \,\, = \,\, \tau(X). $$
 \end{theorem}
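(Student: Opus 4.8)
The plan is to establish the two inclusions $\tau(X) \subseteq \partial_{\rm alg}(\rho(X))$ and $\partial_{\rm alg}(\rho(X)) \subseteq \tau(X)$ separately. The first direction will upgrade Lemma~\ref{tautensor} from a statement about real points to a statement about Zariski closures, while the second will invoke Theorem~\ref{thm:main} together with the generic identifiability of rank two (symmetric) tensors in order to rule out any contribution from the edge variety. Since $X$ is non-defective for $d \geq 3$, as recorded at the start of this section, and since its real points are Zariski dense, Theorem~\ref{thm:main} will apply as soon as the boundary is shown to be non-empty.

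For the inclusion $\tau(X) \subseteq \partial_{\rm alg}(\rho(X))$, I would proceed as follows. Lemma~\ref{tautensor} gives the containment $\tau(X)_\RR \subseteq \partial(\rho(X))$. Because $X_\RR$ is Zariski dense in $X$ and the tangent line at a real smooth point is itself real, the real locus $\tau(X)_\RR$ is Zariski dense in the irreducible variety $\tau(X)$. Taking Zariski closures of both sides of the containment, and using that $\partial_{\rm alg}(\rho(X))$ is by definition the Zariski closure of $\partial(\rho(X))$, I obtain $\tau(X) = \overline{\tau(X)_{\RR}} \subseteq \partial_{\rm alg}(\rho(X))$. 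In particular $\partial_{\rm alg}(\rho(X))$ is non-empty, which is exactly the hypothesis needed to invoke Theorem~\ref{thm:main}.

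For the reverse inclusion, the crucial point is that $\sigma(X)$ is identifiable, so that the integer $k$ appearing in the proof of Theorem~\ref{thm:main} equals $1$. For the Segre variety one may assume every factor has dimension $n_i \geq 2$, and then Kruskal's Theorem applied to rank two gives a unique decomposition of a generic tensor precisely because $\sum_{i=1}^d \min(n_i,2) = 2d \geq 2\cdot 2 + (d-1)$, which is the inequality $d \geq 3$. For the Veronese variety the analogous uniqueness for $d \geq 3$ follows from apolarity, i.e.\ Sylvester's theorem. With $k = 1$, a general point of $\sigma(X)$ lies on a single secant pair, so Case~2 in the proof of Theorem~\ref{thm:main}---which requires two distinct pairs $\{v_i,w_i\}\neq\{v_j,w_j\}$ to collide---cannot occur; equivalently, there is no edge variety, as noted in the text preceding Remark~\ref{taugeneral}. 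Hence every transition through the boundary is of the tangential type treated in Case~1, giving $\partial_{\rm alg}(\rho(X)) \subseteq \tau(X)$. Combining the two inclusions yields $\partial_{\rm alg}(\rho(X)) = \tau(X)$.

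The main obstacle is the identifiability step: one must verify that the hypotheses of Kruskal's Theorem (for the Segre case) and of Sylvester/apolarity (for the Veronese case) hold exactly in the range $d \geq 3$, including the boundary formats such as $2\times 2\times 2$ and $3\times 2\times 2$ where the Kruskal inequality is tight, and confirm that these classical results genuinely force $k=1$ rather than merely bounding the number of decompositions. A secondary technical point worth checking is the Zariski density of $\tau(X)_\RR$ in $\tau(X)$, since it is precisely this density that allows the real containment of Lemma~\ref{tautensor} to pass to the algebraic boundary.
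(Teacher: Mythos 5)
Your proof is correct, and its second half (ruling out the edge variety via generic identifiability of rank two tensors and then invoking Theorem~\ref{thm:main} to get $\partial_{\rm alg}(\rho(X)) \subseteq \tau(X)$) is exactly the paper's argument. Where you diverge is in how equality is forced. The paper does not prove the inclusion $\tau(X) \subseteq \partial_{\rm alg}(\rho(X))$ directly: it only shows non-emptiness of the boundary, by exhibiting sums of complex-conjugate pairs of points of $\hat X$, which lie in $\sigma(X)_\RR$ but not in $\rho(X)$; it then uses the pure-codimension-one statement of Theorem~\ref{thm:main} together with the irreducibility of the hypersurface $\tau(X)$ to conclude that a non-empty codimension-one subvariety of $\sigma(X)$ contained in $\tau(X)$ must equal $\tau(X)$. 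You instead upgrade Lemma~\ref{tautensor} by taking Zariski closures, using that $\tau(X)_\RR$ is Zariski dense in $\tau(X)$ (which does hold here, since the Segre and Veronese varieties and their tangent lines are defined over $\RR$ with dense real points). Your route buys a direct containment without appealing to irreducibility of $\tau(X)$ or to the pure-dimensionality clause of Theorem~\ref{thm:main}, at the cost of leaning on the full strength of Lemma~\ref{tautensor}, whereas the paper's route needs only the easy existence of non-real-rank-two points in $\sigma(X)_\RR$. Both are sound; the density point you flag as needing verification is indeed the only thing to check in your version, and it is immediate for these varieties.
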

 
 \begin{proof}
The secant variety $\sigma(X)$ is identifiable, since Kruskal's Theorem holds 
generically for rank two tensors. Therefore $\epsilon(X)$ does not exist, since points on $\epsilon(X)$ are limits of tensors lying on at least two distinct secant lines; see Remark \ref{rem:edge}. 
To prove the theorem, we must exclude the possibility $\partial_{\rm alg}(\rho(X)) = \emptyset$. 
By taking sums of complex conjugate pairs of points on the affine cone $\hat X$,
 one creates many tensors that lie in $\sigma(X)_\RR$ but not in $\rho(X)$.
Hence the rank two locus $\rho(X)$ has a non-empty boundary
inside $\sigma(X)_\RR$, and the algebraic boundary
$ \partial_{\rm alg}(\rho(X))$ is a non-empty hypersurface in $\sigma(X)$. 
That hypersurface is contained in the irreducible hypersurface $\tau(X)$, 
by Theorem~\ref{thm:main}. This implies that they are equal.
  \end{proof}

We next derive the following general result concerning tensors $T$ of
arbitrary format $n_1 \times n_2 \times \cdots \times n_d$ where $d \geq 3$.
A $2 \times 2 \times 2$ sub-tensor of $T$ has coordinates in which $d-3$ of the indices are fixed, and the remaining three can take one of two different values.
We are interested in the  hyperdeterminants of these sub-tensors.
These are the $2 \times 2 \times 2$ {\em sub-hyperdeterminants} of $T$.
Their number is found to be
\begin{equation}
\label{eq:theirnumber}
\frac{1}{8} \cdot n_1 n_2  n_3 \cdots n_d \,\cdot \!\ \!\!\!\!\! \sum_{1 \leq i < j < k \leq d} (n_i-1)(n_j-1)(n_k-1).
\end{equation}

\begin{theorem} \label{thm:subhyper}
A real tensor $T$ has real border rank $\leq 2$ if and only if all of its flattenings have rank $\leq 2$ 
and all of its  $2 \times 2 \times 2$ sub-hyperdeterminants are non-negative.  If this holds then 
the real rank of $T$ is exactly two if at least one of the flattenings  of $T$ has rank two  
and at least one of the $2 \times 2 \times 2$ sub-hyperdeterminants of $T$ is strictly positive.
\end{theorem}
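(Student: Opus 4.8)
The plan is to split the stated equivalence into a \emph{complex} part and a \emph{reality} part, and to reduce the whole problem to the classical three-factor criterion (\ref{eq:hyperdet}). The flattening-rank condition is exactly the membership test for the secant variety: a tensor has complex border rank $\leq 2$ if and only if all of its flattenings have rank $\leq 2$, this being the set-theoretic description of $\sigma(X)$ by the $3\times 3$ minors of flattenings \cite{Rai}. Since real border rank $\leq 2$ trivially implies complex border rank $\leq 2$, the flattening condition is necessary. For the necessity of the sub-hyperdeterminantal inequalities I would take a sequence of real rank two tensors converging to $T$ and restrict each term to a fixed $2\times 2\times 2$ subtensor; the restriction is again of real border rank $\leq 2$, so its hyperdeterminant is $\geq 0$ by (\ref{eq:hyperdet}), and passing to the limit yields non-negativity of the corresponding sub-hyperdeterminant of $T$.

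For sufficiency I first normalize the format. Writing $C_k \subseteq \RR^{n_k}$ for the real column space of the $k$-th flattening, one has $T \in C_1 \otimes \cdots \otimes C_d$, and the hypotheses force $\dim C_k \leq 2$; discarding factors with $\dim C_k \leq 1$ (which only decreases $d$) we may assume $T$ is a genuine $2\times 2\times\cdots\times 2$ tensor in suitable real bases. On the dense open subset of $\sigma(X)_\RR$ where Kruskal's uniqueness \cite[\S3.2]{KB} applies (valid since $d \geq 3$), the complex rank two decomposition $T = u_1\otimes\cdots\otimes u_d + v_1\otimes\cdots\otimes v_d$ is unique, hence stable under conjugation, so either $\{u,v\}$ is a real pair, giving real rank two, or $v = \bar u$. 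The key observation is that a $2\times 2\times 2$ subtensor of $T$ inherits the two summands $\alpha\,u_i\otimes u_j\otimes u_k$ and $\beta\,v_i\otimes v_j\otimes v_k$, where $\alpha,\beta$ are products of the fixed coordinates of the remaining factors. In the conjugate case $v=\bar u$ these form a complex-conjugate pair with $\beta=\bar\alpha$, so the subtensor has real rank three and, by (\ref{eq:hyperdet}), strictly negative hyperdeterminant for generic choices. Thus on the generic locus the requirement that all sub-hyperdeterminants be $\geq 0$ excludes the conjugate case and selects exactly the real rank two tensors.

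It remains to pass to the closure and to extract the exact-rank statement. The inequalities cut out a closed semialgebraic set $K \subseteq \sigma(X)_\RR$ with $\rho(X) \subseteq K$, and by Theorem~\ref{cor:segver} the only boundary hypersurface is the irreducible $\tau(X)$, so $\sigma(X)_\RR$ decomposes as $\rho(X)$ together with an open region of real rank $\geq 3$ whose boundary lies in $\tau(X)_\RR$. Since $\tau(X)_\RR \subseteq \rho(X)$ by Lemma~\ref{tautensor}, it suffices to show that no point of the open rank-three region lies in $K$; combined with $\rho(X) \subseteq K$ this gives $K = \rho(X)$, which is the claimed equivalence. For the final assertion, a flattening of rank exactly two rules out real rank $\leq 1$, and if $T$ had real border rank two but real rank exceeding two then $T \in \tau(X)_\RR$, where every $2\times 2\times 2$ subtensor is itself a tangential (hence hyperdeterminant-zero) tensor, contradicting the existence of a strictly positive sub-hyperdeterminant; hence $T$ has real rank exactly two.

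The main obstacle is the closure step, where Kruskal uniqueness need not hold. Here I must prove a self-contained multilinear lemma: if the real tensor $T = u + \bar u$ has the property that \emph{all} of its $2\times 2\times 2$ sub-hyperdeterminants are $\geq 0$, then $u_i\otimes u_j\otimes u_k$ is proportional to a real tensor for every triple $i<j<k$, which forces each $u_k$ to be real up to scale and hence $T$ to have real rank $\leq 2$ — contradicting membership in the open rank-three region. Controlling the degenerate subtensors on which the $2\times 2\times 2$ decomposition itself fails to be unique, and pushing the proportionality argument through these loci, is the delicate point that I expect to require the most care.
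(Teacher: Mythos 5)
Your overall architecture matches the paper's: necessity by restricting a real rank-two approximating sequence to each $2\times2\times2$ block, sufficiency by a case split on the complex decomposition, and the exact-rank statement via the vanishing of all sub-hyperdeterminants on $\tau(X)$. But the step you yourself flag as the delicate point is exactly where the proof has to live, and the lemma you propose there is false as stated. For $T = u + \bar u$ with $u = u_1 \otimes \cdots \otimes u_d$, non-negativity of all $2\times2\times2$ sub-hyperdeterminants does \emph{not} force every $u_i\otimes u_j\otimes u_k$ to be proportional to a real tensor, nor every $u_k$ to be real up to scale. Take $u_1 = u_2 = (1,\sqrt{-1})$ and $u_3,\ldots,u_d$ real: then $T = (u_1\otimes u_2 + \bar u_1\otimes\bar u_2)\otimes u_3\otimes\cdots\otimes u_d$ is a real rank-two matrix tensored with real vectors, every sub-hyperdeterminant vanishes (so your hypothesis holds), yet $u_1,u_2$ are not real up to scale. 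The correct conclusion --- the one the paper proves --- is that \emph{all but at most two} of the factors are real up to scale, so that $T$ is a real matrix of rank $\leq 2$ tensored with $d-2$ real rank-one vectors; this still has real rank $\leq 2$ and still yields the desired contradiction, but your route through ``each $u_k$ is real'' cannot be repaired.

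The paper obtains this with no appeal to genericity or to Kruskal at all: writing $u_k = a^{(k)} + \sqrt{-1}\,A^{(k)}$, every $2\times2\times2$ sub-hyperdeterminant of $u+\bar u$ evaluates identically to $-4^3$ times the product of the squares of three $2\times2$ minors $\bigl(a^{(k)}_{i}A^{(k)}_{j}-a^{(k)}_{j}A^{(k)}_{i}\bigr)$, one from each of the three selected factors; see (\ref{eq:sos}). Non-negativity forces each such product to vanish, and if three or more factors had $a^{(k)},A^{(k)}$ linearly independent one could choose index pairs making all three minors nonzero. This single identity replaces both your generic Kruskal argument and the unproven closure lemma, and it also supplies the base case: your necessity argument invokes (\ref{eq:hyperdet}), which is precisely the $2\times2\times2$ instance of the theorem being proved, so quoting it is circular unless you establish it --- which the paper does via the two explicit factorizations of the hyperdeterminant on real pairs and on conjugate pairs. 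Finally, be careful with your normalization to $2\times\cdots\times2$ format: the hypothesis ``all sub-hyperdeterminants $\geq 0$'' is stated in the given coordinates and is not obviously preserved under the real changes of basis you perform, which is another reason the paper works with the conjugate-pair parametrization in the original coordinates.
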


\begin{proof}
We begin with the only-if direction of the first statement.
Let $T$ have real border rank $\leq 2$. Then every $2 \times 2 \times 2$ sub-tensor
$T'$  has real border rank $\leq 2$. We can approximate $T'$
by a sequence of tensors $T''$ that have real rank two.
The entries $t''_{ijk}$ of any tensor in the approximating sequence can be written as
$\,t''_{ijk} = a_i b_j c_k + d_i e_j f_k,$
where the parameters are real. With a computation 
one checks that the hyperdeterminant of $T''$ evaluates to
$$\,(a_1 d_2 - a_2 d_1)^2 (b_1 e_2 - b_2 e_1)^2 (c_1 f_2 - c_2 f_1)^2 .$$
 This quantity is non-negative since all parameters are real.
By continuity, we conclude that all $2 \times 2 \times 2$ sub-hyperdeterminants
of the original tensor $T$ are non-negative.

For the if direction, suppose that $T$ is a tensor in $\sigma(X)_\RR$
whose $2 \times 2 \times 2$ sub-hyperdeterminants are all non-negative.
The complex rank of $T$ is either $1$, $2$ or $\geq 3$. If it is $1$
then $T$ is in the real Segre variety $X_\RR$ and hence in $\rho(X)$.
If $T$ has complex rank $\geq 3$ then it is in $\tau(X)_\RR \backslash X$, and we deduce that $T \in \rho(X)$ from Lemma \ref{tautensor}.

It remains to examine the case when $T$ has complex rank two
and real rank $\geq 3$. The tensor $T$ lies on a real secant line,
spanned by a pair of complex conjugate points in $X$. Consider any $2 \times 2 \times 2$ sub-tensor $T'$ of $T$. We can write the entries
$t'_{ijk}$ of $T'$ as
$$ t'_{ijk} \, = \,
(a_i + A_i \sqrt{-1})(b_j + B_j \sqrt{-1})(c_k + C_k \sqrt{-1}) +
(a_i - A_i \sqrt{-1})(b_j - B_j \sqrt{-1})(c_k - C_k \sqrt{-1}) , $$
where the parameters $a,b,c,A,B,C$ are real.
One checks that the hyperdeterminant of $T'$ is
\begin{equation}
\label{eq:sos}
 -(a_1 A_2 - a_2 A_1)^2  \cdot
(b_1 B_2 - b_2 B_1)^2  \cdot
(c_1 C_2 - c_2 C_1)^2 \cdot 4^3 .
\end{equation} 
This expression is non-positive since all parameters are real.
Our hypothesis that all $2 \times 2 \times 2$ sub-hyperdeterminants are non-negative means they must all be zero.

The rank two representation of $T$ involves pairs  of  vectors $\{a,A\} \subset \RR^{n_1}$,
$ \{b,B\} \subset \RR^{n_2}$, $ \{c,C\} \subset \RR^{n_3} , \ldots$
Every $2 \times 2 \times 2$
sub-hyperdeterminant of $T$ has the form in \eqref{eq:sos} and equates~to zero. From this we conclude that,
for all but two of the  pairs 
$\{a,A\}, \{b,B\}, \{c,C\} , \ldots$, the vectors in the pair are linearly dependent. If not, we could choose indices $(i,j)$ from each vector pair for which the expression $a_i A_j - a_j A_i$ does not vanish, yielding a non-vanishing sub-hyperdeterminant. Hence $T$ is the tensor product of a matrix with $d-2$ vectors. 
This contradicts the hypothesis that $T$ has real rank exceeding~two.

If $T$ is rank one, all flattenings have rank one and all $2 \times 2 \times 2$ sub-hyperdeterminants vanish. So if one flattening has rank two, or one sub-hyperdeterminant is strictly positive, the real rank of $T$ must be at least two. To conclude the proof, it remains to consider tensors in $\rho(X)$ whose real rank exceeds two but are nonetheless there due to taking the closure.

Such tensors lie in $\partial (\rho(X))$, and hence in the tangential variety $\tau(X)$.
We claim that all  sub-hyperdeterminants vanish on $\tau(X)$.
  This is immediate in the base case $X = {\rm Seg}( \PP^1 \times \PP^1 \times \PP^1) $ in $\PP^7$, since 
  $\tau(X)$ equals the vanishing locus of the hyperdeterminant. For larger tensor formats, 
  the projection of the tangential variety 
to any $2 \times 2 \times 2$ sub-tensor is precisely that same tangential
variety. Hence each $2 \times 2 \times 2$ sub-hyperderminant
vanishes on $\tau(X)$, for  Segre varieties $X$ of arbitrary size. Thus, 
if a tensor has at least one flattening of rank two, and at least one 
sub-hyperdeterminant strictly positive, it has real rank exactly two.
\end{proof}

\begin{example} \rm
It is instructive to work through this proof for $2 \times 2 \times 2 \times 2$-tensors $T$.
If $T$ has complex rank two and real rank $\geq 3$ then its entries $t_{ijkl}$ have the
parametric representation
$$  \begin{matrix}t_{ijkl} \,\,\,= \,\, &&
 (a_i + A_i \sqrt{-1})(b_j + B_j \sqrt{-1})(c_k + C_k \sqrt{-1})(d_l+ D_l \sqrt{-1}) \\
&+ & (a_i - A_i \sqrt{-1})(b_j - B_j \sqrt{-1})(c_k - C_k \sqrt{-1}) (d_l - D_l \sqrt{-1}) .
\end{matrix} $$
Suppose  the eight $2 \times 2 \times 2$ sub-hyperdeterminants of $T$ are all non-negative. They are
\begin{equation}
\label{eq:exp222}
\begin{matrix}
 - (a_0^2 + A_0^2)^2 (b_0 B_1 - b_1 B_0)^2 (c_0 C_1 - c_1 C_0)^2 (d_0 D_1 - d_1 D_0)^2 4^3, \\
 - (a_1^2 + A_1^2)^2 (b_0 B_1 - b_1 B_0)^2 (c_0 C_1 - c_1 C_0)^2 (d_0 D_1 - d_1 D_0)^2 4^3, \\
 - (b_0^2 + B_0^2)^2 (a_0 A_1 - a_1 A_0)^2 (c_0 C_1 - c_1 C_0)^2 (d_0 D_1 - d_1 D_0)^2 4^3, \\
 - (b_1^2 + B_1^2)^2 (a_0 A_1 - a_1 A_0)^2 (c_0 C_1 - c_1 C_0)^2 (d_0 D_1 - d_1 D_0)^2 4^3, \\
 - (c_0^2 + C_0^2)^2 (a_0 A_1 - a_1 A_0)^2 (b_0 B_1 - b_1 B_0)^2 (d_0 D_1 - d_1 D_0)^2 4^3, \\
 - (c_1^2 + C_1^2)^2 (a_0 A_1 - a_1 A_0)^2 (b_0 B_1 - b_1 B_0)^2 (d_0 D_1 - d_1 D_0)^2 4^3, \\
 - (d_0^2 + D_0^2)^2 (a_0 A_1 - a_1 A_0)^2 (b_0 B_1 - b_1 B_0)^2 (c_0 C_1 - c_1 C_0)^2 4^3, \\
 - (d_1^2 + D_1^2)^2 (a_0 A_1 - a_1 A_0)^2 (b_0 B_1 - b_1 B_0)^2 (c_0 C_1 - c_1 C_0)^2 4^3.
 \end{matrix}
 \end{equation}
 We note that the first factor does not appear in equation \eqref{eq:sos} because the fixed indices were subsumed into the expressions for one of the parameter pairs $\{a,A\}, \{b,B\}, \{c,C\}$.
 
 It cannot be that  $a_0,A_0,a_1,A_1$ are all zero, and similarly for the
 other letters. Hence
 $$ \begin{matrix}
 (a_0 A_1 - a_1 A_0) (b_0 B_1 - b_1 B_0) (c_0 C_1 - c_1 C_0)= 
  (a_0 A_1 - a_1 A_0) (b_0 B_1 - b_1 B_0) (d_0 D_1 - d_1 D_0) = \phantom{0} \\
  (a_0 A_1 - a_1 A_0) (c_0 C_1 - c_1 C_0) (d_0 D_1 - d_1 D_0) = 
 (b_0 B_1 - b_1 B_0) (c_0 C_1 - c_1 C_0) (d_0 D_1 - d_1 D_0) = 0. 
 \end{matrix} $$
Two of the four factors are zero. There are six cases.
Up to relabeling,  
$\,a_0 A_1 - a_1 A_0 = b_0 B_1 - b_1 B_0 = 0$.
This implies that $T = (a_0,a_1)\otimes  (b_0,b_1) \otimes U$,
where $U$ is a $2 \times 2$-matrix. Clearly $U$ has real rank $\leq 2$.
This shows that $T$ has real rank $\leq 2$, the necessary contradiction. \end{example}

We briefly discuss the implications of our inquality 
description of the real rank two locus $\rho(X)$ for its boundary
$\partial(\rho(X))$. Here $X$ is the
Segre variety of rank one tensors. 
Let ${\rm Hyp}$ denote the variety consisting of 
tensors whose $2 \times 2 \times 2$ sub-hyperdeterminants are all zero.

\begin{proposition} \label{prop:strict}
The real rank two boundary $\partial(\rho(X))$ is a subset of the
semi-algebraic set ${\rm Hyp} \cap \rho(X)$.
This containment is an equality for tensors of order $d=3$ but strict for $d \geq 4$. That is, when $d \geq 4$ there exist tensors on ${\rm Hyp}$ which lie in the interior of $\rho(X)$.
\end{proposition}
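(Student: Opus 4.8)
The plan is to reduce the whole proposition to the identity $\partial(\rho(X)) = \tau(X)_\RR$, which follows by combining Lemma \ref{tautensor} (giving $\tau(X)_\RR \subseteq \partial(\rho(X))$) with Theorem \ref{cor:segver} (giving $\partial_{\rm alg}(\rho(X)) = \tau(X)$, whence $\partial(\rho(X)) \subseteq \tau(X)_\RR$). The first containment of the proposition is then immediate: we have $\partial(\rho(X)) = \tau(X)_\RR \subseteq \rho(X)$, and in the proof of Theorem \ref{thm:subhyper} it was shown that every $2\times 2\times 2$ sub-hyperdeterminant vanishes on $\tau(X)$, i.e.\ $\tau(X) \subseteq {\rm Hyp}$. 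Hence $\partial(\rho(X)) \subseteq {\rm Hyp}\cap\rho(X)$.

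For the equality when $d=3$ I would establish the reverse inclusion ${\rm Hyp}\cap\rho(X) \subseteq \tau(X)_\RR$ by stratifying a point $T\in {\rm Hyp}\cap\rho(X)$ according to its complex rank. Complex rank $\le 1$ gives $T\in X\subseteq\tau(X)$, and a point of border rank two but complex rank three lies in $\tau(X)$ by definition; the essential case is complex rank two. The crucial observation is that when $d=3$ every $2\times 2\times 2$ subtensor uses all three tensor factors, so for a real decomposition $T = a\otimes b\otimes c + a'\otimes b'\otimes c'$ each sub-hyperdeterminant is the product of the three squared $2\times 2$ minors built from the pairs $\{a,a'\}$, $\{b,b'\}$, $\{c,c'\}$. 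If all three pairs were linearly independent one could select indices realising a nonzero minor in each factor, producing a strictly positive sub-hyperdeterminant; hence some pair is dependent and $T = v\otimes M$ for a vector $v$ and a matrix $M$. The same reduction holds for a complex conjugate pair by the computation in the proof of Theorem \ref{thm:subhyper}. Finally $v\otimes M$ lies in $\tau(X)$: writing $M = u_1\otimes w_1 + u_2\otimes w_2$, the tensor $v\otimes M$ is a tangent vector to $X$ at $v\otimes u_2\otimes w_1$, so it is the point at infinity of the corresponding tangent line. This yields ${\rm Hyp}\cap\rho(X) = \tau(X)_\RR = \partial(\rho(X))$.

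For strictness when $d\ge 4$ I would exhibit an explicit witness, the \emph{diagonal} tensor $T = e_{0\cdots 0} + e_{1\cdots 1}$, valid in any format $n_1\times\cdots\times n_d$ with all $n_i\ge 2$. First I would check $T\in{\rm Hyp}$: every $2\times 2\times 2$ subtensor fixes $d-3\ge 1$ indices, and since the only nonzero entries of $T$ sit at the all-zero and all-one positions, fixing at least one index leaves at most one nonzero entry, so the subtensor has rank $\le 1$ or is identically zero and its hyperdeterminant vanishes. (For $d=3$ no index is fixed and the single hyperdeterminant of $T$ equals $1$, which is exactly why $d=3$ is the equality case.) Then I would show $T$ lies in the interior of $\rho(X)$. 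With $P = e_{0\cdots 0}$ and $Q = e_{1\cdots 1}$, Terracini's lemma identifies $T_T\hat\sigma(X)$ with $T_P\hat X + T_Q\hat X$; the summands are spanned by basis tensors whose index differs from $0\cdots 0$, respectively $1\cdots 1$, in at most one coordinate, and for $d\ge 3$ these index sets are disjoint, so the sum is direct of dimension $2(\dim X + 1) = \dim\hat\sigma(X)$ by non-defectiveness. Thus $T$ is a smooth point of $\sigma(X)$ carrying the real rank-two decomposition $P+Q$.

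The hard part will be the last step: upgrading smoothness plus a real decomposition to genuine interiority. I would argue that the addition map $\mu\colon \hat X_\RR\times\hat X_\RR \to \RR^{N+1}$, $(p,q)\mapsto p+q$, has differential at $(P,Q)$ with image $T_P\hat X + T_Q\hat X = T_T\hat\sigma(X)$; as this is full-dimensional, $\mu$ is a submersion onto $\sigma(X)$ near $(P,Q)$ in the semialgebraic sense, so its image — a set of real rank $\le 2$ tensors — contains a neighborhood of $T$ in $\sigma(X)_\RR$. Hence $T\in{\rm int}(\rho(X))$, and since $\partial(\rho(X)) = \tau(X)_\RR$ this forces $T\notin\tau(X)$. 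Together with $T\in{\rm Hyp}\cap\rho(X)$, this produces a point of ${\rm Hyp}$ in the interior of $\rho(X)$ and proves the containment is strict for $d\ge 4$. The delicate points to monitor are the reality of the submersion argument (one must ensure the real fibres of $\mu$, not merely the complex ones, fill a neighborhood) and, for general formats beyond $2\times\cdots\times 2$, that the same diagonal construction still lands in ${\rm int}(\rho(X))$.
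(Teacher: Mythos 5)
Your proposal is correct and follows essentially the same route as the paper: the first inclusion via Theorem \ref{cor:segver} and the vanishing of sub-hyperdeterminants on $\tau(X)$, the $d=3$ equality by forcing one vector pair in the decomposition to be dependent so that $T = v\otimes M \in \tau(X)$, and the diagonal tensor $e_{0\cdots 0}+e_{1\cdots 1}$ as the witness for strictness when $d\geq 4$. The only difference is that you supply a Terracini/submersion justification for the witness lying in the interior of $\rho(X)$, a point the paper asserts without proof; that added detail is sound and strengthens the argument rather than changing it.
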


\begin{proof}
We saw in Theorem \ref{cor:segver}
that $\partial (\rho(X))$ is contained in the tangential variety
$\tau(X)$. So for the first claim,
it suffices to show that each $2 \times 2 \times 2$ sub-hyperdeterminant
vanishes on $\tau(X)$. This was shown in end of the proof of Theorem \ref{thm:subhyper};
see also~\cite[Theorem 1.3]{Oed}.

Suppose that $d = 3$ and $T = (t_{ijk})$ is an 
$n_1 \times n_2 \times n_3$-tensor in $  {\rm Hyp} \cap \rho(X)$.
If $T$ lies in $\tau(X)$ then it is in the boundary $\partial(\rho(X))$, by Lemma \ref{tautensor}. We may therefore assume
that $T$ has real rank $\leq 2$. So, its entries can be written as
$\,t_{ijk} = a_i b_j c_k + d_i e_j f_k$.
Since $T \in {\rm Hyp}$, for all indices
$\, 1 \leq i_1 < i_2 \leq n_1$,
$\, 1 \leq j_1 < j_2 \leq n_2\,$ and
$ \,1 \leq k_1 < k_2 \leq n_3$, we have
$$ 
(a_{i_1} d_{i_2} - a_{i_2} d_{i_1}) \cdot
(b_{j_1} e_{j_2} - b_{j_2} e_{j_1})  \cdot
(c_{k_1} f_{k_2} - c_{k_2} f_{k_1}) \quad = \quad 0  .
$$
This condition implies that
either $\{a,d\}$ or $\{b,e\}$ or $\{c,f\}$
are linearly dependent. After relabeling and rescaling
we may assume $a = d$.
This implies $T = a \otimes \bigl( (b \otimes c) + (e \otimes f)  \bigr)$.
This tensor lies in the tangential variety $\tau(X)$ of
the Segre variety $X = {\rm Seg}(\PP^{n_1-1} \times
\PP^{n_2-1} \times \PP^{n_3-1})$.

It remains to show that $\partial(\rho(X))$ is strictly
contained in ${\rm Hyp} \cap \rho(X)$ for $d \geq 4$.
Consider $d=4$ and $X = {\rm Seg}({(\PP^1)}^4)$.
Let $\{e_1,e_2\}$ be the standard basis of $\RR^2$.
The rank two tensor
\begin{equation} 
\label{eq:x^4+y^4}
 T \,\,\,=\,\,\,
e_1 \otimes  e_1 \otimes  e_1 \otimes  e_1 \,+\,
e_2 \otimes  e_2 \otimes  e_2 \otimes  e_2
\end{equation}
is in the relative interior of the real rank two locus $\rho(X)$.
All eight $2 \times 2 \times 2$ sub-tensors have rank one,
so the eight hyperdeterminants vanish. Hence
$T$ lies in ${\rm Hyp} \cap \rho(X) \backslash \partial(\rho(X))$.
This tensor can now be embedded into all larger formats,
and we get the conclusion  for $d \geq 4$.
\end{proof}

\begin{remark} \rm
The tensor (\ref{eq:x^4+y^4}) lies on the interior of $\rho(X)$. All of its $2 \times 2 \times 2$ hyperdeterminants vanish, so it lies on the variety ${\rm Hyp}$. This demonstrates that the only-if direction in the
second sentence of Theorem \ref{thm:subhyper} does not hold.\end{remark}

\begin{remark} \rm
The number (\ref{eq:theirnumber})
of $2 \times 2 \times 2$ sub-hyperdeterminants 
for a tensor of format $n \times n \times \cdots \times n $ equals
$\frac{1}{8}\binom{d}{3} n^d (n-1)^3$.
If the tensor is symmetric then this
number reduces~to
\begin{equation}
\label{eq:theirnumbersym}
\binom{ n+d-4}{ n-1} \binom{ \binom{n}{2}+2}{3}.
\end{equation}
Among these we only need  hyperdeterminants 
  whose expansion as in (\ref{eq:exp222}) is a
 sixth power like $(a_0 A_1 - a_1 A_0)^6$
times an extraneous factor $\prod_i (a_i^2 + A_i^2)^2$.
  That reduces the number to
\begin{equation}
\label{eq:theirnumbersym2}
\binom{ n+d-4}{ n-1} \binom{n}{2}.
\end{equation}
Each of these symmetric hyperdeterminants looks like
the quartic $D$ in the next example.
\end{remark}

\begin{example} 
\label{ex:binarycubic1} \rm
Let $n=2, d=3$. Here $X$ is the twisted cubic curve in $\PP^3$.
The tangential variety $\tau(X)$ is the quartic surface in 
$\sigma(X) = \PP^3$ given by  the discriminant
of a binary cubic:
\begin{equation}
\label{eq:Q_0}
 D \,\,\,= \,\,\, x_0^2 x_3^2 - 6 x_0 x_1 x_2 x_3 - 3 x_1^2 x_2^2+4x_1^3 x_3+4x_0 x_2^3 
\,\,\, = \,\,\, {\rm det}
\begin{small}
\begin{pmatrix}
\, x_0 & 2 x_1 &  x_2 &   0 \, \\
\,  0  &  x_0 & 2 x_1 &  x_2  \, \\
\, x_1 & 2 x_2 &  x_3 &  0 \, \\
\,  0 &  x_1 & 2 x_2 & x_3 \,\,
\end{pmatrix}  
\end{small}
 \end{equation}
This is the $2 {\times} 2 {\times} 2$ hyperdeterminant (\ref{eq:hyperdet}) 
 specialized to symmetric tensors \cite[page 2]{Oed2}.
 Both numbers (\ref{eq:theirnumbersym}) and   (\ref{eq:theirnumbersym2})  are one.
 The real rank two locus $\rho(X)$  is the subset of $\PP^3_\RR$ defined by  $D \geq 0$.
 For a study of hyperdeterminants of symmetric tensors  we refer to    Oeding~\cite{Oed2}.
 \end{example}

 \section{The Tangential Variety of the Veronese} \label{sec:five}
 
The variety $\sigma(X)$ of rank two tensors is defined by
the $3 \times 3$-minors of all flattenings \cite{Rai}.
Among the real points on that secant variety, the locus $\rho(X)$ 
is defined by the hyperdeterminantal inequalities in
Theorem~\ref{thm:subhyper}. Since the algebraic boundary of
$\rho(X)$ is the tangential variety $\tau(X)$, one might think that $\tau(X)$ is obtained by
setting the hyperdeterminants to zero. But this is false, as seen in Proposition \ref{prop:strict}.
Oeding and Raicu \cite{OR, Rai} showed that
 $\tau(X)$ is often defined by quadrics. In this section we
 focus on Veronese varieties, and we 
  translate the representation-theoretic results from \cite{OR} into
 explicit quadrics. We close with examples
 that illustrate the findings in our paper
 for the rational normal curve $X = \nu_d(\PP^1)$.
 
The following result is for  tensors with $d \geq 3$.
The variety $X$ comprises rank one tensors, so it is the Segre variety
$X = {\rm Seg}(\PP^{n_1 - 1} \times \cdots \times \PP^{n_d - 1})$ or 
the Veronese variety $X = \nu_d (\PP^{n-1})$.

\begin{theorem}[Oeding-Raicu \cite{OR}, Raicu \cite{Rai}] \label{thm:oedingraicu}
The ideal of the secant variety $\sigma(X)$ is generated by the
$3 \times 3$-minors of the various flattenings of the tensor. For symmetric tensors, it suffices to
take the $3 \times 3$-minors of the most symmetric catalecticant matrix.
The ideal of the tangential variety $\tau(X)$ is generated in degree at most four;
the Schur modules of  minimal generators are known explicitly. If $d \geq 5$ then
quadrics suffice to generate the ideal of $\tau(X)$.
\end{theorem}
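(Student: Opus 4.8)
The plan is to treat this as a representation-theoretic computation, since that is precisely the strategy of the cited papers. The group $G = GL(V_1) \times \cdots \times GL(V_d)$ (respectively $GL(V)$ in the symmetric case) acts on the space of tensors, and both $\sigma(X)$ and $\tau(X)$ are $G$-stable subvarieties. Consequently their homogeneous ideals are $G$-submodules of the polynomial ring, and each graded piece decomposes as a direct sum of Schur modules $S_{\lambda_1}(V_1) \otimes \cdots \otimes S_{\lambda_d}(V_d)$. The entire question then reduces to identifying precisely which Schur modules occur, and in which degrees they first appear as minimal generators.

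For the secant variety, I would first record the set-theoretic description: a tensor has border rank $\leq 2$ if and only if every flattening has matrix rank $\leq 2$, equivalently all $3 \times 3$ minors of all flattenings vanish. The substance is the ideal-theoretic claim that these minors generate the full ideal; this is Raicu's proof of the Garcia--Stillman--Sturmfels conjecture \cite{Rai}. In the symmetric setting one replaces flattenings by catalecticant matrices, and the reduction to the single most symmetric catalecticant follows from the same circle of ideas.

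For the tangential variety, I would parametrize a general point as the derivative of the Segre or Veronese parametrization, namely as the sum $T = y_1 \otimes x_2 \otimes \cdots \otimes x_d + \cdots + x_1 \otimes \cdots \otimes x_{d-1} \otimes y_d$, exactly as in Lemma \ref{tautensor}. Using this parametrization one computes, degree by degree, the dimension of each graded piece of the coordinate ring of $\tau(X)$, subtracts from the dimension of the corresponding piece of the polynomial ring, and reads off the Schur modules lying in the ideal. Comparing the quadratic, cubic, and quartic pieces against the subideal already generated in lower degrees isolates the minimal generators.

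The main obstacle is the degree bound: showing that no minimal generators appear beyond degree four, and that for $d \geq 5$ the quadrics alone suffice. This is the heart of the representation-theoretic analysis, because it amounts to proving that every Schur module occurring in the degree-five-and-higher part of the ideal is already produced by multiplying the lower-degree generators---a stabilization phenomenon that becomes cleaner as $d$ grows but requires careful bookkeeping of Littlewood--Richardson multiplicities. Pinning down the exact list of Schur modules of minimal generators, rather than merely a degree bound, is the genuinely hard and computational part, and this is exactly what Oeding and Raicu supply in \cite{OR}.
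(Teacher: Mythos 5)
The paper gives no proof of this statement: it is imported verbatim from Oeding--Raicu \cite{OR} and Raicu \cite{Rai}, and the text simply cites those works. Your sketch is a faithful high-level account of the strategy those papers use (decomposing the ideal into Schur modules, Raicu's resolution of the Garcia--Stillman--Sturmfels conjecture for the secant ideal, and the degree bookkeeping for the tangential ideal), and like the paper it ultimately defers every substantive step to the same references --- so there is nothing to fault, beyond noting that what you have written is an outline of the cited proofs rather than a proof.
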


The space of minimal generators of the prime ideals in question is
a $G$-module, where $G = {\rm SL}(n)$ if  $X = \nu_d (\PP^{n-1})$ 
 and $G = {\rm SL}(n_1) \times \cdots \times {\rm SL}(n_d)$ if
$X = {\rm Seg}(\PP^{n_1 - 1} \times \cdots \times \PP^{n_d - 1})$.
The term {\em Schur module} refers to the irreducible representations
that occur in these $G$-modules. We shall use basics from the
 representation theory of $G$, as in  Landsberg's book~\cite{Lan}.

Our aim is to extract explicit polynomials from the last two sentences in Theorem~\ref{thm:oedingraicu},
for the case when $X = \nu_d (\PP^{n-1})$ and
$G = {\rm SL}(n)$. The irreducible $G$-modules of degree $d$ are indexed by
partitions $\lambda$ of  $d$ with at most $n$ parts.  The module
for $\lambda$ is denoted $S_\lambda$. It has a natural basis, labeled by
semi-standard Young tableaux of shape $\lambda$ filled with $\{1,2,\ldots,n\}$.

We shall present a basis for the space $I_2(\tau(X))$ of quadrics  that vanish on $\tau(X)$. 
Clearly, all such quadrics are minimal ideal generators, since $\tau(X)$ does not lie
in a linear subspace of $\PP^{\binom{n+d-1}{d}-1}$.
Proposition \ref{prop:landsberg} says
that $I_2(\tau(X))$ usually defines $\tau(X)$ as a subvariety of $\sigma(X)$.

Fix an even positive integer $k $ and consider the
irreducible $G$-module $S_\lambda(\CC^n)$
where $\lambda$ is the partition  $(2d-k,k)$. We draw $\lambda$
as a shape with two rows, the first of length $2d-k$
and the second of length $k$.  A basis of $S_\lambda(\CC^n)$
is indexed by the semi-standard Young tableaux (SSYT) of shape $\lambda$
filled with integers between $1$ and $n$. The
SSYT of shape $\lambda$ are identified with
pairs $(\mu,\nu)$ of row vectors $\mu \in \{1,2,\ldots,n\}^{2d-k}$ and $\nu \in \{1,2,\ldots,n\}^{k}$
that satisfy
\begin{equation}
\label{eq:munu}  \begin{matrix}
\mu_1 \leq  \mu_2 \leq \mu_3 \leq \cdots \leq \mu_{k} \leq \mu_{k+1} \leq \cdots \leq \mu_{2d-k},\,
\qquad \qquad \, \\
\quad \nu_1 \leq \nu_2 \leq \nu_3 \leq \cdots \leq \nu_{k} \,\,\,\,{\rm and} \,\,\,\,
\mu_i <  \nu_i \,\,\hbox{for} \,\, i = 1,2,\ldots,k.
\end{matrix}
\end{equation}
By the {\em Hook Length Formula}, the number of such SSYT of shape $\lambda$ equals
 \begin{equation}
 \label{eq:HLF}
 {\rm dim}\bigl(S_\lambda(\CC^n)\bigr) \,\, = \,\,\,
 \prod_{i=1}^k \frac{n-1+i}{2d+2-k-i} 
 \cdot \prod_{i=k+1}^{2d-k} \frac{n-1+i}{2d+1-k-i}
  \cdot \prod_{j=1}^k \frac{n-2+j}{k+1-j} .
\end{equation}
  We realize $S_\lambda(\CC^n)$ as a submodule of $(\CC^n)^{\otimes 2d}$
  by assigning to $(\mu,\nu)$ with (\ref{eq:munu}) the basis vector
  \begin{equation}
  \label{eq:wedges} (e_{\mu_1} \wedge e_{\nu_1}) \otimes 
(e_{\mu_2} \wedge e_{\nu_2}) \otimes  \cdots
\otimes     (e_{\mu_k} \wedge e_{\nu_k}) \otimes 
     [\,e_{\mu_{k+1}} e_{\mu_{k+2}} \cdots \,e_{\mu_{2d-k}}].
     \end{equation}
Here $e_1,\ldots,e_n$ is the standard basis of $\CC^n$, the symbol
 $\wedge$ denotes antisymmetrization of the tensor product,
and the expression $[ \,\cdots\, ]$ is the symmetrization of that tensor product.

We next translate the expression (\ref{eq:wedges}) into
a quadratic polynomial in the $\binom{n+d-1}{d}$ 
homogeneous coordinates $x_u$ on 
$\PP^{\binom{n+d-1}{d}-1}$. This polynomial is supposed to 
vanish on $\tau(X)$. We write $(t_1:t_2:\cdots:t_n)$ for
the homogeneous coordinates on $\PP^{n-1}$.
The parametrization of $\sigma(X)$ by pairs of points in the cone over
the Veronese variety $X$ 
can be written as follows:
\begin{equation}
\label{eq:taupara}
\sum_{|u| = d} \binom{|u|}{u} x_u t^u \,\, = \,\,
(a_1 t_1 + a_2 t_2 + \cdots + a_n t_n)^d \,+\,
(b_1 t_1 + b_2 t_2 + \cdots + b_n t_n)^d .
\end{equation}
We translate the expression (\ref{eq:wedges}) into 
the following polynomial in the $2n$ parameters:
\begin{equation}
\label{eq:wedge2}
\prod_{i=1}^k (a_{\mu_i} b_{\nu_i} - a_{\nu_i} b_{\mu_i}) \cdot
\biggl(\sum
a_{\mu_{j_1}} a_{\mu_{j_2}} \cdots a_{\mu_{j_{d-k}}}
b_{\mu_{j_{d-k+1}}} b_{\mu_{j_{d-k+2}}} \cdots b_{\mu_{j_{2d-2k}}}\biggr),
\end{equation}
where the sum is over permutations
$(j_1,j_2,\ldots,j_{2d-2k}) $ of $\{k+1,k+2,\ldots,2d-k\}$ such that 
$$ j_1 < j_2 < \cdots < j_{d-k} \quad {\rm and} \quad
  j_{d-k+1} < j_{d-k+2} < \cdots  < j_{2d-2k} . $$
 The sum in (\ref{eq:wedge2}) has  $\binom{2d-2k}{d-k}$ terms.
 The group $G = {\rm SL}(n)$ acts on the vectors $a$ and $b$, and hence on the
 span of the polynomials  (\ref{eq:wedge2}).  This
   is the irreducible representation $S_\lambda(\CC^n)$.

 \begin{proposition} \label{prop:uniquepreimage}
 The polynomial (\ref{eq:wedge2}) is in the coordinate ring of $\sigma(X)$,
 i.e.~it lies in the image of the  ring homomorphism $\CC[x] \rightarrow \CC[a,b]$ 
 that is given by the parameterization (\ref{eq:taupara}).
 Its preimage in $\CC[x]$ is unique.  That polynomial vanishes on $\tau(X)$ if and only if $k \geq 4$.
 \end{proposition}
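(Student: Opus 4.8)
The parametrization \eqref{eq:taupara} realizes the ring map $\phi\colon\CC[x]\to\CC[a,b]$ as $\phi(x_u)=a^u+b^u$, where $a^u=a_1^{u_1}\cdots a_n^{u_n}$. Write $P$ for the polynomial in \eqref{eq:wedge2}. The plan for the first two claims is elementary bidegree bookkeeping. First note that $P$ is bihomogeneous of bidegree $(d,d)$ in $(a,b)$ and, because $k$ is even, is symmetric under $a\leftrightarrow b$ (each of the $k$ determinantal factors changes sign, while the symmetric sum is invariant). I would write $P=2\sum_{u,v}q_{uv}\,a^ub^v$ with $q_{uv}=q_{vu}$; here $q$ is \emph{uniquely} determined since the monomials $a^ub^v$ with $|u|=|v|=d$ are linearly independent. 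Setting $Q=\sum_{u,v}q_{uv}\,x_ux_v$, one has $\phi(Q)=Q(a)+P+Q(b)$, where $Q(a)=\sum q_{uv}a^ua^v=\tfrac12 P(a,a)$ is the bidegree-$(2d,0)$ part and $Q(b)$ the bidegree-$(0,2d)$ part. Since $k\geq 1$, every term of $P$ carries a factor $a_{\mu_i}b_{\nu_i}-a_{\nu_i}b_{\mu_i}$ that vanishes on the diagonal $b=a$, so $P(a,a)=P(b,b)=0$ and therefore $\phi(Q)=P$; this proves claim (1). Uniqueness (claim (2)) is injectivity of $\phi$ on quadrics: the bidegree-$(d,d)$ part of $\phi(Q')$ recovers the symmetric matrix of $Q'$ through the independent monomials $a^ub^v$, so $\phi(Q')=P$ forces $Q'=Q$.

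For claim (3) the plan is to detect vanishing of $Q$ on $\tau(X)$ by realizing a tangent line as a limit of secants. The affine cone over $\tau(X)$ is the Zariski closure of the forms $(a\cdot t)^{d-1}(b\cdot t)$, and
$$ G(s) \;:=\; \frac{1}{s}\Bigl(\bigl((a+sb)\cdot t\bigr)^{d} - (a\cdot t)^{d}\Bigr) \;\xrightarrow[s\to 0]{}\; d\,(a\cdot t)^{d-1}(b\cdot t), $$
while each $G(s)$ lies on the secant line through the two Veronese points $\bigl((a+sb)\cdot t\bigr)^{d}$ and $-(a\cdot t)^{d}$. Since $Q$ is homogeneous of degree two and continuous, $Q$ vanishes on $\tau(X)$ if and only if $\lim_{s\to0}Q(G(s))=0$ for all $a,b$. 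Expanding $(a+sb)^u-a^u=s\,D_ba^u+O(s^2)$, with $D_b$ the directional derivative, a short computation gives $\lim_{s\to0}Q(G(s))=\sum_{u,v}q_{uv}(D_ba^u)(D_ba^v)=\tfrac12\,\partial_s\partial_{s'}\big|_{0}\,P(a+sb,\,a+s'b)$. Thus claim (3) reduces to showing that the mixed second derivative $\partial_s\partial_{s'}\big|_{0}\,P(a+sb,a+s'b)$ vanishes identically if and only if $k\geq 4$.

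I expect this last reduction to be the heart of the matter, and it is settled by a single substitution. Feeding $p\mapsto a+sb$ and $q\mapsto a+s'b$ into one determinantal factor gives
$$ (a_{\mu_i}{+}sb_{\mu_i})(a_{\nu_i}{+}s'b_{\nu_i}) - (a_{\nu_i}{+}sb_{\nu_i})(a_{\mu_i}{+}s'b_{\mu_i}) \;=\; (s'-s)\,(a_{\mu_i}b_{\nu_i} - a_{\nu_i}b_{\mu_i}), $$
so $P(a+sb,a+s'b)=(s'-s)^{k}\,H(s,s')$, where $H(0,0)=\prod_{i=1}^k(a_{\mu_i}b_{\nu_i}-a_{\nu_i}b_{\mu_i})\cdot S(a,a)$ (with $S$ the symmetric sum of \eqref{eq:wedge2}) is not identically zero. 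Hence $P(a+sb,a+s'b)$ has order exactly $k$ in $(s,s')$. The operator $\partial_s\partial_{s'}\big|_{0}$ extracts the coefficient of the degree-two monomial $ss'$; this vanishes whenever $k>2$ and equals $-2\prod_{i=1}^2(a_{\mu_i}b_{\nu_i}-a_{\nu_i}b_{\mu_i})\,S(a,a)\not\equiv 0$ when $k=2$. Since $k$ is even, the derivative vanishes identically precisely when $k\geq 4$, completing the argument. The one step requiring care is the passage from ``vanishing on $\tau(X)$'' to the mixed derivative of $P$: it uses that $Q$ is exactly the quadratic preimage furnished by claims (1)–(2) and that the tangent forms $(a\cdot t)^{d-1}(b\cdot t)$ are Zariski dense in the cone over $\tau(X)$.
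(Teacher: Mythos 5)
Your proof is correct, and it diverges from the paper's in a way worth recording. For the first two claims the underlying mechanism is the same as in the paper --- everything turns on the $a \leftrightarrow b$ symmetry of (\ref{eq:wedge2}) (using that $k$ is even) and on the fact that no quadric lies in the kernel of $\CC[x] \to \CC[a,b]$ --- but where the paper argues abstractly (the coefficients of (\ref{eq:taupara}) generate the invariants because symmetric functions are generated by power sums; the kernel is the ideal of $\sigma(X)$, which contains no quadrics), you exhibit the preimage $Q$ explicitly by reading off the bidegree-$(d,d)$ coefficients of $P$ and observing that the bidegree-$(2d,0)$ and $(0,2d)$ parts of $\phi(Q)$ vanish because every term of $P$ carries a determinantal factor that dies on the diagonal $b=a$. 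The genuine departure is in the third claim: the paper disposes of it by citing part (1) of the Corollary in \cite[\S 1]{OR}, whereas you give a self-contained computation, realizing the cone over $\tau(X)$ as the set of limits $\lim_{s\to 0}\frac{1}{s}\bigl(((a+sb)\cdot t)^d-(a\cdot t)^d\bigr)$, identifying the value of $Q$ at such a point with $\frac{1}{2}\,\partial_s\partial_{s'}\big|_{0}\,P(a+sb,a+s'b)$, and exploiting the factorization of each determinantal factor as $(s'-s)(a_{\mu_i}b_{\nu_i}-a_{\nu_i}b_{\mu_i})$, so that $P(a+sb,a+s'b)$ vanishes to order exactly $k$ in $(s,s')$ and its $ss'$-coefficient is nonzero precisely when $k=2$. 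I verified the key identities (the bidegree $(d,d)$ of $P$, the formula $\phi(Q)=Q(a)+P+Q(b)$, the mixed-derivative identity, and the nonvanishing of $H(0,0)$, using $\mu_i<\nu_i$), and they all hold; since $k$ is even this settles the dichotomy $k=2$ versus $k\geq 4$. What your route buys is independence from the representation-theoretic input of \cite{OR} for this statement; what the paper's route buys is brevity and alignment with the Schur-module framework used throughout Section 5.
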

 
 \begin{proof}
 Since the index $k$ introduced prior to (\ref{eq:munu}) 
 is even, the polynomial (\ref{eq:wedge2}) is unchanged if we
 switch the two letters $a$ and $b$.
The polynomial (\ref{eq:wedge2}) is invariant in that sense.
  The coefficients of the right hand side of
(\ref{eq:taupara}) span the space of all such invariant polynomials of degree $d$.
This follows from the fact that the usual ring of symmetric polynomials
is generated by the power sums. Hence (\ref{eq:wedge2}) is in the
image of the ring map $\CC[x] \rightarrow \CC[a,b]$.
The kernel of that map is the ideal of the secant variety $\sigma(X)$.
That ideal contains no quadrics. Hence the preimage of (\ref{eq:wedge2})
in $\CC[x]$ is unique.  The final statement follows from part (1)
in the Corollary in \cite[\S 1]{OR}. The next example  illustrates that statement.
 \end{proof}
 
\begin{example} \label{ex:a1b2}
\rm
Let $n=2$ and $k=d$ even, so $X$ is the rational normal curve in $\PP^d$.
Consider the polynomial $(a_1 b_2 - a_2 b_1)^k $.
For $k=2$, its preimage in $\CC[x]$ is  $x_0x_2 - x_1^2$. This
does not vanish on $\tau(X) = \PP^2$. For $k=4$, the preimage is
$ x_0 x_4 - 4 x_1 x_3 + 3 x_2^2$. This vanishes on $\tau(X)$.
\end{example}

For any pair $(\mu,\nu)$ as in (\ref{eq:munu}),
we write $f_{(\mu,\nu)}$ for the unique preimage 
of (\ref{eq:wedge2}) under the map
$\CC[x] \rightarrow \CC[a,b]$. This is well-defined by Proposition \ref{prop:uniquepreimage}.
The polynomial $f_{(\mu,\nu)}$ is easily computable by solving a linear system of equations.
For instance, two $x$-polynomials in Example \ref{ex:a1b2} are $f_{(11,22)}$ and
$f_{(1111,2222)}$. Or, using tableaux, we might write $f_{11 \atop 22}$ and
$f_{1111 \atop 2222}$.

\begin{corollary} \label{eq:quadrics}
A basis for the quadrics that vanish on the tangential variety $\tau(X)$
of the Veronese variety $X $ consists of the $f_{(\mu,\nu)}$ that
are indexed by the SSYT of shape $\lambda = (2d-k,k)$ where
$k \in \{4,5,\ldots,d\}$ is even. Their number is obtained by
summing (\ref{eq:HLF}) over those~$k$.
\end{corollary}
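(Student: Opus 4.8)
The plan is to treat the space of quadrics on $\PP^{\binom{n+d-1}{d}-1}$ as a module for $G = \mathrm{SL}(n)$ and to read off $I_2(\tau(X))$ as a subsum of its irreducible decomposition, using Proposition~\ref{prop:uniquepreimage} as a single-vector test on each summand. First I would note that the degree-two piece of the homogeneous coordinate ring is the $G$-module $\mathrm{Sym}^2\bigl(\mathrm{Sym}^d(\CC^n)\bigr)$, and that it injects into the quadrics on $\sigma(X)$ because the ideal of $\sigma(X)$ contains no quadrics (Proposition~\ref{prop:uniquepreimage}). Since $\tau(X)$ is invariant under $G$, the space $I_2(\tau(X))$ of quadrics vanishing on $\tau(X)$ is a $G$-submodule of $\mathrm{Sym}^2\bigl(\mathrm{Sym}^d(\CC^n)\bigr)$.

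The key structural input is the classical plethysm
\[
\mathrm{Sym}^2\bigl(\mathrm{Sym}^d(\CC^n)\bigr) \;=\; \bigoplus_{\substack{0 \le k \le d \\ k \ \mathrm{even}}} S_{(2d-k,\,k)}(\CC^n),
\]
in which every irreducible summand occurs with multiplicity one (see \cite{Lan}). The summand $S_{(2d-k,k)}(\CC^n)$ is spanned by the polynomials (\ref{eq:wedge2}), equivalently by their unique preimages $f_{(\mu,\nu)}$; letting $(\mu,\nu)$ run over all SSYT of shape $(2d-k,k)$ and $k$ over all even values in $\{0,2,\dots,d\}$ thus produces a basis of the entire space of quadrics.

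The decisive step exploits multiplicity freeness: a submodule of a multiplicity-free module is the direct sum of a subset of its irreducible constituents. Hence $I_2(\tau(X)) = \bigoplus_{k \in S} S_{(2d-k,k)}(\CC^n)$ for some set $S$ of even indices, and $S_{(2d-k,k)}(\CC^n) \subseteq I_2(\tau(X))$ if and only if $S_{(2d-k,k)}(\CC^n) \cap I_2(\tau(X)) \ne 0$, i.e.\ if and only if a single $f_{(\mu,\nu)}$ of that shape vanishes on $\tau(X)$. By Proposition~\ref{prop:uniquepreimage} this occurs precisely when $k \ge 4$. Therefore $S = \{\,k : 4 \le k \le d,\ k \ \mathrm{even}\,\}$, the listed $f_{(\mu,\nu)}$ form a basis of $I_2(\tau(X))$, and their number is $\sum_k \dim S_{(2d-k,k)}(\CC^n)$, with each dimension given by the Hook Length Formula (\ref{eq:HLF}).

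The hard part is really the completeness of the list, namely excluding quadrics that vanish on $\tau(X)$ but are missed by these shapes. This is exactly what multiplicity one buys us: because each isotypic component is a single irreducible, the all-or-nothing dichotomy above applies verbatim, so Proposition~\ref{prop:uniquepreimage}'s test on one tableau settles each component. Everything else --- identifying the coordinate-ring quadrics with $\mathrm{Sym}^2\bigl(\mathrm{Sym}^d(\CC^n)\bigr)$ and summing (\ref{eq:HLF}) --- is bookkeeping.
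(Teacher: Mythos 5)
Your proposal is correct and follows essentially the same route the paper intends: the multiplicity-free plethysm $\mathrm{Sym}^2(\mathrm{Sym}^d(\CC^n)) = \bigoplus_{k \text{ even}} S_{(2d-k,k)}(\CC^n)$, the identification of each summand with the span of the $f_{(\mu,\nu)}$ via the injectivity coming from $I(\sigma(X))$ having no quadrics, and the single-tableau test supplied by Proposition~\ref{prop:uniquepreimage} to decide which summands lie in $I_2(\tau(X))$. The paper leaves this argument implicit after Proposition~\ref{prop:uniquepreimage}; you have simply written it out, correctly.
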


There are no quadrics that vanish on $\tau(X)$ when $d \leq 3$.
For $d \geq 4$ we have constructed an explicit basis for that space of quadrics.
The dimensions of this space is given in Table~\ref{tab:repdim}.

\begin{table}[h]
$$
\begin{matrix}   & \,\,d& 4 & 5 & 6 & 7 & 8 & 9 & 10 \vspace {-0.15in} \\
n & &   &     &    &      &      &      & \\
2 && 1 &  3 & 6 & 10 & 15 & 21 & 28 \\
3 && 15 & 60 & 153 & 315 & 570 & 945 & 1470 \\
4 && 105 & 540 & 1711 & 4270 & 9190 & 17850 & 32130 \\
5 && 490 & 3150 & 12145 & 36155 & 91395 & 205905 & 425425 \\
\end{matrix}
$$
\vspace{-0.1in}
\caption{\vspace{-0.1in}
\label{tab:repdim} Dimension of the space of quadrics vanishing on $\tau\bigl(\nu_d(\PP^{n-1})\bigr)$}
\end{table}

\begin{proposition} \label{prop:landsberg}
Fix a Veronese variety $X = \nu_d(\PP^{n-1})$ with $d \geq 4$.
The tangential variety $\tau(X)$ is defined, as a subvariety of the secant variety $\sigma(X)$,
by the quadrics in Corollary~\ref{eq:quadrics}.
\end{proposition}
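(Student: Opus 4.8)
The plan is to establish the set-theoretic equality $\sigma(X) \cap V\!\left(I_2(\tau(X))\right) = \tau(X)$, where $I_2(\tau(X))$ is the space spanned by the quadrics $f_{(\mu,\nu)}$ of Corollary~\ref{eq:quadrics}. The inclusion $\tau(X) \subseteq \sigma(X) \cap V\!\left(I_2(\tau(X))\right)$ is automatic, since those quadrics vanish on $\tau(X)$ by construction. For $d \geq 5$ the reverse inclusion is immediate from Theorem~\ref{thm:oedingraicu}: there the full ideal $I(\tau(X))$ is generated by quadrics, and by Corollary~\ref{eq:quadrics} the $f_{(\mu,\nu)}$ span $I_2(\tau(X))$, so they already cut out $\tau(X)$ in all of $\PP^N$. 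The real content is the case $d=4$, where $I(\tau(X))$ genuinely requires generators up to degree four, so one cannot simply quote the ideal-generation statement and must instead exploit the intersection with $\sigma(X)$. I would in fact give one geometric argument valid for all $d \geq 4$.

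First I would pin down the points of $\sigma(X)\setminus\tau(X)$. Since $\sigma(X)$ is the border-rank-two locus, every point of it has rank one (a point of $X \subseteq \tau(X)$), rank exactly two, or border rank two with rank $\geq 3$; for a Veronese variety this last class is exactly $\tau(X)\setminus X$. A tangent vector $m^{d-1}\ell$ with $\ell \not\parallel m$ has rank $\geq 3$ once $d \geq 3$, so $\tau(X)$ contains no rank-two points, and hence $\sigma(X)\setminus\tau(X)$ is precisely the locus of tensors $\ell_1^d + \ell_2^d$ with $\ell_1,\ell_2$ linearly independent. It therefore suffices to produce, for each such point, a quadric in our list that does not vanish on it.

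The key reduction is equivariance. The variety $\tau(X)$ is ${\rm GL}(n)$-invariant, hence so is $I_2(\tau(X))$, and ${\rm GL}(n)$ acts transitively on ordered pairs of linearly independent vectors. Absorbing $d$-th roots of the scalars into the $\ell_i$, every rank-two point is ${\rm GL}(n)$-equivalent to the single tensor $T_0 = e_1^d + e_2^d$, so it is enough to exhibit one $f_{(\mu,\nu)}$ with $f_{(\mu,\nu)}(T_0)\neq 0$. I would take $k=4$ (always available, as $k$ is even and $4 \leq d$) and the tableau $\mu = (\underbrace{1,\ldots,1}_{d},\underbrace{2,\ldots,2}_{d-4})$, $\nu = (2,2,2,2)$. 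Evaluating its preimage formula \eqref{eq:wedge2} is routine: the bracket factor is $(a_1 b_2 - a_2 b_1)^4$, and at $(a,b)=(e_1,e_2)$ the symmetric sum contributes a single surviving term equal to $1$, so the pullback equals $1$. By Proposition~\ref{prop:uniquepreimage} this pullback is exactly $f_{(\mu,\nu)}(T_0)$, whence $f_{(\mu,\nu)}(T_0) = 1 \neq 0$.

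Combining the three ingredients yields $\sigma(X) \cap V\!\left(I_2(\tau(X))\right) = \tau(X)$, which is the assertion. The main obstacle is conceptual rather than computational: for $d=4$ the quadrics do not generate the full ideal of $\tau(X)$, so the argument cannot rest on Theorem~\ref{thm:oedingraicu} and must instead combine the geometric structure of $\sigma(X)$ (the absence of rank-two points in $\tau(X)$) with the equivariant reduction to a single explicit non-vanishing evaluation. The one point deserving care is the clean identification of $\sigma(X)\setminus\tau(X)$ with the genuine rank-two locus, which relies on the standard description of the border-rank-two locus of a Veronese variety.
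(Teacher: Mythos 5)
Your argument is correct, but it proceeds quite differently from the paper, whose entire proof is a citation of \cite[Theorem 8.1.4.1]{Lan}. You instead give a self-contained set-theoretic argument: you stratify $\sigma(X)$ to see that $\sigma(X)\setminus\tau(X)$ consists exactly of the honest rank-two points $\ell_1^d+\ell_2^d$ with $\ell_1,\ell_2$ independent (using that $\sigma(X)$ is the union of secant and tangent lines, and that $\ell^{d-1}m$ has rank $\geq 3$ for $d\geq 3$), reduce by ${\rm GL}(n)$-equivariance of $I_2(\tau(X))$ to the single point $T_0=e_1^d+e_2^d$, and then evaluate the $k=4$ quadric $f_{(\mu,\nu)}$ with $\mu=(1^d,2^{d-4})$, $\nu=(2^4)$ via its pullback (\ref{eq:wedge2}), correctly finding the value $1$ (for $d=4$ the symmetric sum is empty and equals $1$; for $d>4$ exactly one term survives). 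The details check out: the tableau satisfies (\ref{eq:munu}), the pullback computation is right, and Proposition \ref{prop:uniquepreimage} justifies identifying the pullback value with $f_{(\mu,\nu)}(T_0)$. What your route buys is transparency about why only a scheme-/set-theoretic statement \emph{inside} $\sigma(X)$ is claimed, and a uniform treatment of the delicate case $d=4$, where $I(\tau(X))$ genuinely needs generators of degree $3$ and $4$ so that Theorem \ref{thm:oedingraicu} alone does not suffice; what the citation buys is brevity and, in Landsberg's formulation, an ideal-theoretic statement rather than merely a set-theoretic one. If you keep your version, you should make explicit the standard fact that every point of the second secant variety of a smooth variety lies on an honest secant line or on a tangent line, since that is the one ingredient you use without proof.
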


\begin{proof}
This is proved in Landsberg's book on tensors, namely in \cite[Theorem 8.1.4.1]{Lan}.
\end{proof}

\begin{example} \label{eq:ternaryquartics} \rm
For ternary quartics ($n=3, d=4$), we consider the
$6 \times 6$ Hankel matrix
$$ H \quad = \quad 
\begin{small}
\begin{pmatrix} 
x_{400} & x_{220} & x_{202} & x_{310} & x_{301} & x_{211}  \\ 
 x_{220} & x_{040} & x_{022} & x_{130} & x_{121} & x_{031} \\ 
 x_{202} & x_{022} & x_{004} & x_{112} & x_{103} & x_{013} \\ 
 x_{310} & x_{130} & x_{112} & x_{220} & x_{211} & x_{121} \\ 
 x_{301} & x_{121} & x_{103} & x_{211} & x_{202} & x_{112} \\ 
 x_{211} & x_{031} & x_{013} & x_{121} & x_{112} & x_{022}   
 \end{pmatrix}. 
 \end{small}
$$ 
The Veronese surface $X \subset \PP^{14}$ is defined by the
$2 \times 2$-minors of $H$. The $5$-dimensional secant variety $\sigma(X)$ is defined
by the $3 \times 3$-minors of $H$.
The tangential variety $\tau(X)$ is the codimension one subvariety of $\sigma(X)$
defined by the vanishing of the following $15$ quadrics:
$f_{(1111, 2222)}$,
$f_{(1111, 2223)}$,
$f_{(1111, 2233)}$,
$f_{(1111, 2333)}$,
$f_{(1111, 3333)}$,
$f_{(1112, 2223)}$,
$f_{(1112, 2233)}$,
$f_{(1112, 2333)}$,
$f_{(1112, 3333)}$,
$f_{(1122, 2233)}$,
$f_{(1122, 2333)}$,
$f_{(1122, 3333)}$,
$f_{(1222, 2333)}$,
$f_{(1222, 3333)}$,
$f_{(2222, 3333)}$.
Each of these symbols translates into a product of $k=4$ factors as in (\ref{eq:wedge2}),
and from this we recover the quadric. For instance,
$ f_{(1111,2222)} = (a_1 b_2 - a_2 b_1)^4 =  x_{400} x_{040}-4 x_{310} x_{130}+3 x_{220}^2$
and $ f_{(1112,2333)} = 
(a_1 b_2 - a_2 b_1) (a_1 b_3 - a_3 b_1)^2 (a_2 b_3 - a_3 b_2) =
x_{310}x_{013}-x_{301}x_{022}-x_{220}x_{103}-x_{211} x_{112}+2 x_{202} x_{121}$.
\end{example}

\begin{remark} \rm
The quadratic polynomials $f_{\mu,\nu}$ that cut out $\tau(X)$
do not contribute to the semi-algebraic description of the real rank two locus $\rho(X)$.
Unlike the hyperdeterminants in Theorem \ref{thm:subhyper},
they do not give valid non-trivial inequalities for $\rho(X)$.
For instance, in Example~\ref{eq:ternaryquartics}, the polynomial
$f_{(1111,2222)}$ is non-negative on $\sigma(X)_\RR$ while  $f_{(1112,2333)}$ 
changes sign on $\rho(X)$. 
Here, $\rho(X)$ is defined in $\sigma(X)_\RR$ by nine quartic inequalities;
cf.~(\ref{eq:theirnumbersym2}) and (\ref{eq:Q_0}).
\end{remark}

\smallskip

For the remainder of this paper we set $n=2$, so we consider
 symmetric $2 {\times} 2 {\times} \cdots {\times} 2$-tensors.
  These tensors form a projective space  $\PP^d$,  namely the space of  binary forms
  \begin{equation}
  \label{eq:binaryform}
  f \quad = \quad \sum_{i=0}^d x_i \binom{d}{i} s^{d-i} t^i.
  \end{equation}
To describe the relevant varieties, we use the following
{\em Hankel matrix} of format $3 \times (d-1)$:
\begin{equation}
\label{eq:hankelmatrix}
 H \quad = \quad
\begin{pmatrix} x_0 & x_1 & x_2 & \cdots & x_{d-2} \\ 
x_1 & x_2 & x_3 & \cdots & x_{d-1} \\ 
x_2 & x_3 & x_4 & \cdots & x_d \end{pmatrix}. 
\end{equation}
Our three varieties of interest satisfy the inclusions $X \subset \tau(X) \subset \sigma(X)$
in $\PP^d$. They are
\begin{itemize}
\item $X \,=\, \{ {\rm rank}(H) \leq 1\} \,=\, \hbox{the rational normal curve in $\PP^d$} 
\,=\, \{\hbox{binary forms $\ell^d$}\}$; \vspace{-0.1in}
\item $\tau(X) \,=\,\,\hbox{points on tangent lines of the curve $X$} 
\,=\, \{\hbox{binary forms $\ell_1^{d-1} \ell_2 $}\}$; \vspace{-0.1in}
\item $\sigma(X) \,=\, \{ {\rm rank}(H) \leq 2\} \, = \,
\hbox{points on secant lines of $X$}
\,=\, \{\hbox{binary forms $\ell_1^d + \ell_2^d $}\}$.
\end{itemize}
These projective varieties have dimensions $1,2$ and $3$.
Their defining equations are as follows.

\begin{corollary}\label{tauandsigma}
The prime ideals of $X$ and $\sigma(X)$ 
are respectively generated by the $2 \times 2$-minors 
and the $3 \times 3$-minors of the Hankel matrix $H$ in (\ref{eq:hankelmatrix}).
The prime ideal of the tangential variety $\tau(X)$ is minimally
generated by the quartic $D$ if $d=3$, by the cubic ${\rm det}(H)$ 
and the quadric $Q = x_0 x_4 - 4 x_1 x_3 + 3 x_2^2$ if $d=4$,
and by $\binom{d-2}{2}$ linearly independent quadrics if $d \geq 5$. 
\end{corollary}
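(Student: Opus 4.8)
The plan is to prove the three assertions separately, reducing each to results already established in the paper, so that the only genuinely new work is a dimension count together with the bookkeeping that separates \emph{generating the prime ideal} from merely \emph{defining the variety}. Throughout, $X$, $\tau(X)$ and $\sigma(X)$ are irreducible (they are closures of images of irreducible parameter spaces), so their ideals are automatically prime; the content lies in the explicit list of generators. I would first dispose of $X$ and $\sigma(X)$: the rational normal curve $X=\nu_d(\PP^1)$ is the classical determinantal variety cut out by the $2\times 2$ minors of the $1$-generic Hankel matrix $H$ in (\ref{eq:hankelmatrix}), and these minors are known to generate its prime ideal. For $\sigma(X)$ I would specialize Theorem \ref{thm:oedingraicu} to $n=2$: the ideal is generated by the $3\times 3$ minors of a catalecticant, and for binary forms the ideals of $3\times 3$ minors of all catalecticants with at least three rows and three columns coincide, so the matrix $H$ may be used. (When $d=3$ the matrix $H$ has only two columns, there are no $3\times 3$ minors, and indeed $\sigma(X)=\PP^3$.)

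Next I would treat the small cases of $\tau(X)$. For $d=3$, Example \ref{ex:binarycubic1} identifies $\tau(X)$ with the quartic hypersurface $\{D=0\}$ in $\PP^3$; since the discriminant $D$ of a binary cubic is irreducible, its prime ideal is the principal ideal $(D)$. For $d=4$ the matrix $H$ is $3\times 3$, so $\sigma(X)=\{{\rm det}(H)=0\}$ is an irreducible cubic hypersurface with $I(\sigma(X))=({\rm det}(H))$. By Corollary \ref{eq:quadrics} (cf.\ Table \ref{tab:repdim}) there is, up to scale, exactly one quadric vanishing on $\tau(X)$, namely $Q=x_0x_4-4x_1x_3+3x_2^2$ from Example \ref{ex:a1b2}, and Proposition \ref{prop:landsberg} shows that $Q$ cuts $\tau(X)$ out of $\sigma(X)$. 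Since $Q$ is a quadric and ${\rm det}(H)$ is an irreducible cubic not divisible by $Q$, both are minimal generators, and the explicit Schur-module description of the minimal generators in degree $\leq 4$ from Theorem \ref{thm:oedingraicu} certifies that there are no others.

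Finally, for $d\geq 5$, Theorem \ref{thm:oedingraicu} guarantees that $I(\tau(X))$ is generated by quadrics, and Corollary \ref{eq:quadrics} supplies an explicit basis: the polynomials $f_{(\mu,\nu)}$ indexed by the SSYT of shape $(2d-k,k)$ with $k$ even and $4\le k\le d$. The remaining task is to count them. For $n=2$ the irreducible $SL(2)$-module $S_{(2d-k,k)}(\CC^2)$ is ${\rm Sym}^{2d-2k}(\CC^2)$, of dimension $2d-2k+1$ (equivalently, set $n=2$ in (\ref{eq:HLF})). Summing over the even values $k=4,6,\ldots$ up to $d$ gives $\sum_{k}(2d-2k+1)$, and a routine arithmetic-progression evaluation, carried out separately for $d$ even and $d$ odd, yields $\binom{d-2}{2}$. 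Linear independence is part of the statement of Corollary \ref{eq:quadrics}, so these $\binom{d-2}{2}$ quadrics are minimal generators.

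The step I expect to be the main obstacle is not any individual computation but the passage from \emph{defining equations} to \emph{minimal generators of the prime ideal}. For $d\geq 5$ this is handed to us by Theorem \ref{thm:oedingraicu} and Corollary \ref{eq:quadrics}, so the argument collapses to the binomial identity above. The genuinely delicate case is $d=4$, where quadrics alone do not generate: one must verify that the single quadric $Q$ together with the cubic ${\rm det}(H)$ already exhaust the ideal with no further generators. This is precisely the point at which the explicit minimal-generator content of Theorem \ref{thm:oedingraicu}, rather than its mere set-theoretic consequence, is indispensable.
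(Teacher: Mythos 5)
Your proposal is correct and follows essentially the same route as the paper, which likewise cites the classical determinantal description for $X$ and $\sigma(X)$ and derives the ideal of $\tau(X)$ from Theorem \ref{thm:oedingraicu} and Corollary \ref{eq:quadrics}; you merely make explicit the arithmetic $\sum_{k}(2d-2k+1)=\binom{d-2}{2}$ and the bookkeeping in the cases $d=3,4$ that the paper leaves implicit.
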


\begin{proof}
The equations for $X$ and $\sigma(X)$ are classical and found
in many sources, such as \cite{Lan}.
The ideal of $\tau(X)$ is derived from the description in Theorem~\ref{thm:oedingraicu}
and Corollary~\ref{eq:quadrics}.
\end{proof}

The real rank two locus $\rho(X)$ is a $3$-dimensional
semi-algebraic set. It consists of
binary forms $\ell_1^d + \ell_2^d$ where $\ell_1,\ell_2 $ are real.
Its algebraic boundary is $\tau(X)$.
Theorem \ref{thm:subhyper} implies:  

\begin{corollary} 
\label{cor:among}
The real rank two locus $\rho(X)$ is the subset of $\,\PP^d_\RR$ that is defined
by the vanishing of the $3 \times 3$-minors of $H$ in
(\ref{eq:hankelmatrix}) together with the following $d-2$ quartic inequalities:
\begin{equation}
\label{eq:symmhyperdet}
 \!\! x_i^2 x_{i+3}^2 - 6 x_i x_{i+1} x_{i+2} x_{i+3} 
  - 3 x_{i+1}^2 x_{i+2}^2+4x_{i+1}^3 x_{i+3}+4x_i x_{i+2}^3 \,\geq \,0
  \quad \,\,
  \hbox{for $i = 0,1,\ldots,d-3$.}
  \end{equation} 
\end{corollary}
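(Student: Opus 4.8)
The plan is to specialize Theorem~\ref{thm:subhyper} to the symmetric situation $n=2$, $X=\nu_d(\PP^1)$, matching its two conditions --- on the ranks of the flattenings and on the signs of the $2\times 2\times 2$ sub-hyperdeterminants --- against the two conditions appearing in the corollary, namely the vanishing of the $3\times 3$-minors of $H$ and the inequalities (\ref{eq:symmhyperdet}). Since $\rho(X)$ is by definition the real border rank $\leq 2$ locus, the first sentence of Theorem~\ref{thm:subhyper} is exactly what we must translate.

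First I would handle the rank condition on the flattenings. By Theorem~\ref{thm:oedingraicu}, for a symmetric tensor it suffices to control the $3\times 3$-minors of the most symmetric catalecticant, which for binary forms of degree $d$ is precisely the Hankel matrix $H$ of (\ref{eq:hankelmatrix}); and Corollary~\ref{tauandsigma} records that these $3\times 3$-minors generate the prime ideal of $\sigma(X)$. Hence the condition ``all flattenings of $T$ have rank $\leq 2$'' is equivalent to $T\in\sigma(X)_\RR$, i.e.\ to the vanishing of all $3\times 3$-minors of $H$. This accounts for the first half of the asserted description.

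Next I would translate the sub-hyperdeterminant condition. The key combinatorial observation is that every $2\times 2\times 2$ sub-tensor of a symmetric binary-$d$ tensor is itself symmetric: writing the symmetric tensor entries as the coordinates $x_0,\ldots,x_d$ of (\ref{eq:binaryform}), so that the entry indexed by $(i_1,\ldots,i_d)\in\{0,1\}^d$ equals $x_{i_1+\cdots+i_d}$, fixing $d-3$ of the indices and letting the other three vary yields a block whose entry with free indices $(a,b,c)$ is $x_{\,i+a+b+c}$, where $i\in\{0,1,\ldots,d-3\}$ is the number of the fixed indices equal to $1$. This block is symmetric in $(a,b,c)$ with entries $x_i,x_{i+1},x_{i+2},x_{i+3}$, and its hyperdeterminant is exactly the symmetric $2\times 2\times 2$ hyperdeterminant $D$ of Example~\ref{ex:binarycubic1}, evaluated at $(x_i,x_{i+1},x_{i+2},x_{i+3})$, which is the quartic on the left of (\ref{eq:symmhyperdet}). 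As $i$ runs over $0,\ldots,d-3$ one obtains all distinct sub-hyperdeterminants, $d-2$ in total, in agreement with (\ref{eq:theirnumbersym2}) at $n=2$. Thus the non-negativity of all $2\times 2\times 2$ sub-hyperdeterminants of $T$ is equivalent to the $d-2$ inequalities (\ref{eq:symmhyperdet}). Combining this with the previous paragraph and Theorem~\ref{thm:subhyper} yields the corollary.

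The only step that requires genuine care --- and the main obstacle --- is the reduction of all sub-hyperdeterminants to the single family $D(x_i,\ldots,x_{i+3})$. One must verify that each $2\times 2\times 2$ block really is symmetric of the claimed form, that every value $i\in\{0,\ldots,d-3\}$ is realized while no block lies outside this range, and that the binomial normalization relating monomial and symmetric-tensor coordinates does not interfere; the latter is automatic, since Example~\ref{ex:binarycubic1} already computes $D$ directly in the $x_j$. Once these points are settled, the identification of the two semi-algebraic descriptions is immediate.
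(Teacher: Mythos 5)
Your proposal is correct and follows essentially the same route as the paper's (very terse) proof: regard $f$ as a symmetric $2\times 2\times\cdots\times 2$-tensor, apply Theorem \ref{thm:subhyper}, identify the flattening condition with the $3\times 3$-minors of $H$, and observe that each $2\times 2\times 2$ sub-hyperdeterminant is the symmetric quartic $D$ of (\ref{eq:Q_0}) evaluated at $(x_i,\ldots,x_{i+3})$, with the count $d-2$ matching (\ref{eq:theirnumbersym2}). You simply spell out the details that the paper leaves implicit.
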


\begin{proof}
We regard $f$ as a  $2 \times 2 \times \cdots \times 2$-tensor with $d$ factors,
and we apply (\ref{eq:theirnumbersym2}) and (\ref{eq:Q_0}).
\end{proof}

We conclude by examining the real rank two loci for
binary quartics and binary quintics.

\begin{example} \label{2x2x2x2} \rm
Let $d=4$. We examine the geography of the real hypersurface  $\sigma(X)_\RR$ in $\PP^4_\RR$.
 It decomposes into three semi-algebraic strata.
 Up to closure, these strata are:  the set
$ \sigma(X)^{++0} = \{\ell_1^4 + \ell_2^4\}$ 
of semi-definite real rank two quartics;
the set $ \sigma(X)^{+-0} = \{\ell_1^4 - \ell_2^4\}$
of indefinite real rank two quartics;
the set $\sigma(X)^{{\rm cpx}}$ of quartics
of real rank three and complex rank two. The set $\sigma(X)^{{\rm cpx}}$ is parametrized by terms $\ell^4 + \bar{\ell}^4$, where $\ell$ is a complex linear form and $\bar{\ell}$ its complex conjugate .
 All three strata intersect in the  curve $X_\RR$ of
 rank one quartics.
 
 We examine the points on the boundary  $\partial(\rho(X))$. Limits of points in $\sigma(X)^{++0}$ have real rank one or two, because cancelation between the two positive summands in $l_1^4 + l_2^4$ cannot occur.
 Hence, points in $\partial ( \rho(X) )\backslash X $ must be in the closure of $\sigma(X)^{+-0}$. A typical example~is
$$ \begin{matrix}
s^3 t & = & \lim_{\epsilon \rightarrow 0} \frac{1}{4 \epsilon} \bigl( (s + \epsilon t)^4 - s^4 \bigr) 
& = & \lim_{\epsilon \rightarrow 0} ( s^3 t - \epsilon^2 s t^3 ) . 
\end{matrix}
$$
The first limit approaches $s^3 t$ from within $\sigma(X)^{+-0}$.
The second limit approaches from within  the real rank three locus $\sigma(X)^{{\rm cpx}}$.
To see this, we express it in the form $\ell^4 + \bar{\ell}^4$. Setting $i = \sqrt{-1}$ we have
$$ 
s^3 t - \epsilon^2 s t^3 \quad = \quad
\frac{1}{8 \epsilon i} \bigl( (s + \epsilon i t)^4 - (s- \epsilon i t)^4 \bigr). $$
Since the above decomposition is unique, the tensor is in $\sigma(X)^{{\rm cpx}}$.

 The real rank two locus $\rho(X)$  is defined by 
the equation  ${\rm det}(H) = 0$  and two inequalities $ D_0 \geq 0, D_1 \geq 0$.
Here $D_0$ is the quartic in (\ref{eq:Q_0}) and $D_1$ is obtained by 
replacing $x_i \mapsto x_{i+1}$
for all unknowns.
The variety $ V \bigl( {\rm det}(H) , D_0 , D_1 \bigr)$ has two irreducible
components, namely the line $V(x_1,x_2,x_3)$ and the
surface $\tau(X) = V\bigl(\,{\rm det}(H),\, 3x_2^2-4 x_1 x_3+x_0 x_4\,\bigr)$.
Hence, the real rank two boundary is not obtained by  
setting the inequalities in Corollary \ref{cor:among} to zero.
Note that the rank two tensor $T$ in  (\ref{eq:x^4+y^4})
is symmetric and lies in $V(x_1,x_2,x_3)$.
 \end{example}
 
 \begin{example}\label{dis5} \rm
Let $d=5$. Then $\rho(X)$ is defined by ${\rm rank}(H) \leq 2$ and 
 three inequalities $D_0,D_1,D_2 \geq 0$.
The ideal of the tangential surface $\tau(X)$ is generated by three quadrics
\begin{equation}
\label{eq:threePs}
 Q_0 = 3x_2^2-4x_1x_3+x_0x_4, \,\,
 Q_1 = 2 x_2 x_3-3 x_1 x_4+x_0 x_5,\,\,
Q_2 =  3 x_3^2-4 x_2 x_4+x_1 x_5. 
\end{equation}
It turns out that one inequality suffices to define the real rank two locus
inside the rank two locus. Namely, $\rho(X)$ is the  set of binary quintics given by
${\rm rank}(H) \leq 2$ and $Q_1^2-4 Q_0 Q_2 \geq 0$.
\end{example}

\medskip

\subsubsection*{Acknowledgements}
We are grateful to Luke Oeding and Kristian Ranestad for helpful comments on this project.
Anna Seigal received partial funding from the Pachter Lab and NIH grant R01HG008164.
Bernd Sturmfels was partially supported by
the US National Science Foundation (DMS-1419018)
 and the Einstein Foundation Berlin.
 The article was completed when both authors visited the
 Max-Planck Institute for Mathematics in the Sciences,
 Leipzig, Germany.
\bigskip

\begin{small}

\end{small}

\bigskip \medskip \bigskip

\noindent
\footnotesize {\bf Authors' addresses:}

\smallskip

\noindent Anna Seigal,
University of California, Berkeley, USA,
{\tt seigal@berkeley.edu}

\noindent Bernd Sturmfels, 
University of California, Berkeley, USA,
{\tt bernd@berkeley.edu}

\end{document}